\newtheorem{lemma}{Lemma}
\newtheorem{theorem}[lemma]{Theorem}
\newtheorem{definition}[lemma]{Definition}
\newtheorem{remark}[lemma]{Remark}
\newtheorem{corollary}[lemma]{Corollary}
\journal{}
\begin{document}

\begin{frontmatter}
\title{
Longterm existence of solutions of a reaction diffusion system with non-local terms modeling an immune response - an interpretation-orientated proof }
\author[1]{Cordula Reisch\corref{cor1}}
\ead{c.reisch@tu-bs.de}
\author[1] {Dirk Langemann}
\ead{d.langemann@tu-bs.de}
\cortext[cor1]{Corresponding author}

\address[1]{TU Braunschweig, Institute for Partial Differential Equations, Universit\"atsplatz 2, 38106 Braunschweig, Germany}

\begin{abstract}
This paper shows the global existence and boundedness of solutions of a reaction diffusion system modeling liver infections. 
The existence proof is presented step by step and the focus lies on the interpretation of intermediate results in the context of liver infections which is modeled. 
Non-local effects in the dynamics between the virus and the immune system cells coming from the immune response in the lymphs lead to an integro-partial differential equation. 
While existence theorems for parabolic partial differential equations are textbook examples in the field, the additional integral term requires new approaches to proving the global existence of a solution.
This allows to set up an existence proof with a focus on interpretation leading to more insight in the system and in the modeling perspective at the same time. 

We show the boundedness of the solution in the $L^1(\Omega)$- and the $L^2(\Omega)$-norms, and use these results to prove the global existence and boundedness of the solution. 
A core element of the proof is the handling of oppositely acting mechanisms in the reaction term, which occur in all population dynamics models and which results in reaction terms with opposite monotonicity behavior. 
In the context of modeling liver infections, the boundedness in the $L^\infty(\Omega)$-norm has practical relevance:
Large immune responses lead to strong inflammations of the liver tissue.
Strong inflammations negatively impact the health of an infected person and lead to grave secondary diseases.
The gained rough estimates are compared  with numerical tests. 
\end{abstract}

\begin{keyword}
reaction diffusion equations \sep integro-partial differential equation \sep  existence \sep global bounds \sep modeling inflammation 
\MSC[2020]{35A01, 35K57, 35Q92, 45K05, 92-10}
\end{keyword}

\end{frontmatter}

\section{Introduction}
Modeling the coupled dynamics of virus and the immune system during a liver infection caused by a hepatitis virus is challenging because the mechanisms behind persisting infections over month or years are still unknown, \cite{thomas_experimental_2016}.
An opportunity for overcoming the problem of unknown mechanisms on the cell scale contains two integrative changes. 
First, the modeling scale is changed from the cell scale towards a mesoscopic scale on the length scale of a few centimeters. 
Second, the mechanisms, which are unknown in detail, are replaced by integrative mechanisms representing the commonly accepted properties of the unknown mechanisms. 
This change of view results in a compact model of partial differential equations. 

Modeling inflammations with differential equations is a widely used approach. 
For example in \cite{ibragimov_mathematical_2005, volpert_elliptic_2011}, atherogenesis as a particular inflammation is modeled with reaction diffusion equations. 
In \cite{ibragimov_mathematical_2005}, instable states are interpreted as persisting infections, whereas in \cite{volpert_elliptic_2011} travelling waves are interpreted as persisting infections. 
A reaction diffusion system for modeling the dynamics of liver infections is presented in \cite{rezounenko_viral_2018}.
In \cite{aston_new_2018,dahari_modeling_2007} systems of ordinary equations are used for modeling the total amount of immune system cells and virus during a hepatitis C liver infection. 

In \cite{kerl_reaction_2012, reisch_chemotactic_2019, reisch_modeling_2019, reisch_entropy_2020, reisch_reaktions-diffusions-gleichungen_2020} liver inflammations are modeled by using reaction diffusion equations describing the virus concentration and the T~cell population during an infection. 
As a specific feature, the reaction diffusion equations include a space-dependent and non-local term describing the inflow of T~cells in a small part of the modeled region. 
The amount of inflowing T~cells depends on the total virus amount in the regarded part of the liver.
The dependency on the total virus amount is represented by an integral term over the whole domain. 
The non-local term models the T~cell dispersal starting in the lymphs. 

The description of the inflow region, called portal field, reflects some important parts of the real liver structure.
Therefore, the term is desirable and necessary for modeling liver infections even if it makes the mathematical analysis of the model more difficult.
 One challenging task caused by the non-local and space-depending inflow term is the proof of the longterm existence of a solution. 
 Often used results for parabolic partial differential equations are based on Lipschitz continuous reaction functions with respect to the state variable or require monotonous reaction functions. 
Due to the integral term and the oppositely acting mechanisms, these results are not directly applicable to the system modeling the dynamics of liver infections, see Sec.~\ref{sec:existence}.

 In this paper, the longterm existence and boundedness of solutions of the model proposed in \cite{kerl_reaction_2012} is proven and the results are interpreted in the light of the application. 
 The focus therefore lies not only on adapting established theorems but on finding interpretable estimations on the way to an existence result. 
 Therefore, the model is presented in Sec.~\ref{sec:reacdiff}.
 An important property of the reaction functions are the oppositely acting mechanisms like in the classical Lotka-Volterra equations and in nearly all population dynamics models.
The non-local term is a new feature compared to the classical model and influences the dynamics of the model much more than only by its position-dependency.  
 
 In Sec.~\ref{sec:existence}, the longterm existence of solutions is proven. 
First, the local existence of a weak solution is concluded from existence results for parabolic differential equations with Dirichlet boundary conditions.
Additionally, properties of the solution like its non-negativity and the boundedness of one state variable are shown. 
Due to the inflow term modeling the arriving of T~cells from the lymphs, showing a-priori boundedness of the second state variable is the main concern. 

The boundedness of the second variable is shown in different steps, starting with proofs of the boundedness of the solution in $L^1(\Omega)$ and $L^2(\Omega)$ in Sec.~\ref{sec:lpbounds}. 
The proofs use different functionals depending on the $L^1(\Omega)$- or $L^2(\Omega)$-norms and they are handling the oppositely acting mechanisms in the reaction function. 
As a result, we get rough but robust estimates for the $L^1(\Omega)$- and $L^2(\Omega)$-norms of the solution.
In the context of liver infections, this result will be interpreted in the light of the total amount of T~cells.

The results are used for proving the boundedness of the solution in $L^\infty(\Omega)$.
Consequently, the global existence of a bounded solution is shown. 
The boundedness of the solution in $L^\infty(\Omega)$ is an important property showing how the mathematical proof evokes insight in the application, which is a liver infection, and vice versa the inflammation application feeds back to the mathematics. 
The immune response in the second state variable, i.e. the amount of T~cells, contains the strength of the inflammation. 
Its upper bound is related to illness and survival of an infected individual. 

In Sec.~\ref{sec:num}, the quality of used estimates is visualized for different solutions types which are interpreted as different infection courses. 
The paper finishes with a conclusion of the results and further ideas.

\section{Reaction diffusion infection model with non-local inflow}\label{sec:reacdiff}

A model for describing the interaction between virus and T~cells during a viral liver infection is presented in \cite{kerl_reaction_2012} and analyzed in \cite{kerl_reaction_2012, reisch_chemotactic_2019, reisch_modeling_2019, reisch_entropy_2020, reisch_reaktions-diffusions-gleichungen_2020}. 
The virus population density $u=u(t,\mathbf{x})$ is named according to the prey in the classical Lotka Volterra model. 
The cells of the immune system are concluded as T~cells.
They can be seen as predator for the virus and are therefore named $v=v(t,\mathbf{x})$. 
We model the interaction in a part of the liver seen as a domain $\Omega \subset \mathbb{R}^d$ with $d= \{ 2, 3 \}$. 

According to \cite{kerl_reaction_2012}, the T~cells, as the summed cells of the immune system, kill infected liver cells and thus the virus. 
Both, the T~cells and the virus spread out in the liver, modeled by diffusion terms. 
The virus grow by reproduction in dependency of the local virus amount. 
The change of the T~cell population depends on the total virus load inside the liver, which is modeled by an inflow term $j[u]$. 

Since the T~cells as immune response are produced in the lymphs outside the liver, the T~cells arrive in the regarded part of the liver through portal fields, which are sub-domains $\Theta\subset\Omega$. 
Furthermore, the external production of the immune response motivates the dependence of the inflow $j=j[u]$ on the total amount of virus in the regarded domain $\Omega$, i.\,e.\ the inflow $j=j[u](\mathbf{x})$ 
in every point $\mathbf{x}\in\Theta$ depends non-locally on the integral $U(t)=\|u(t,\cdot)\|_{L^1(\Omega)}$ of $u$ over $\Omega$.

\begin{remark}[Modeling scale]\label{rem:scale}
In the context of liver infections, the area $\Theta$ can be seen as a model for a portal field through which T~cells enter a certain part of the liver $\Omega$. 
The model abstracts from the cell-scale structure of the liver and the involved cells. 
Nevertheless, we cover some basic structure of a liver by still regarding portal fields in the liver. 
\end{remark}

We regard, as a simplification, the boundary $\partial \Omega$ of the domain $\Omega $ to be impermeable. 
This results in zero flux or homogeneous Neumann boundary conditions.

Using as few mechanisms as possible, see \cite{reisch_entropy_2020}, we find the predator-prey model
\begin{align}
\begin{aligned}\label{eq:sys}
u_{,t} &= u w(u) - \gamma u v + \alpha \Delta u  \ \ && \text{ for } \mathbf{x} \in \Omega, t>0, \\
v_{,t} &= j[u] - \eta  (1 -u) v    + \beta \Delta v \ \ && \text{ for } \mathbf{x} \in \Omega, t>0,\\
 u(0,\mathbf{x} )&= u_0(\mathbf{x}) , \ v(0,\mathbf{x} )=v_0(\mathbf{x}) && \text{ for } \mathbf{x} \in \Omega , \\
 0&= \nabla u \cdot \mathbf{n} =  \nabla v \cdot \mathbf{n}   && \text{ for } \mathbf{x} \in \partial \Omega, t>0
\end{aligned}
\end{align}
with a growth function $w(u)$ describing the non-linear growth of the virus in absence of other mechanisms and the non-local inflow $j=j[u]({\mathbf{x}})$ of T~cells.
The constants $\alpha$ and $\beta$ describe the strength of diffusion. 
The reaction diffusion system in Eq.~(\ref{eq:sys}) contains the predator term $\gamma u v$ diminishing the virus in presence of the immune response $v$, and the decay term $\eta  (1 -u) v$ describing the fade out of the immune response in absence of any virus.

The growth rate $w$ in \cite{kerl_reaction_2012} describes a logistic growth of the virus with a strong Allee effect \cite{allee_principles_1949}, i.\,e.\
\begin{align}\label{eq:growth}
w(u)= (1-u) \frac{u- u_\mathrm{min}}{u+ \kappa}\;\mbox{ with }\; 0<u_\mathrm{min}\ll 1\;\mbox{ and }\;\kappa>0.
\end{align}
The minimal density for the survival of the virus is $u_\mathrm{min}$. 
Otherwise, the virus is locally attacked and it decreases without the secondary immune response from the lymphs. 
The parameter $\kappa$ is a small parameter fitting the growth in Eq.~(\ref{eq:growth}) to a pure logistic growth for values $u$ close to 1.

As usual in population dynamics models, the reaction functions in Eq.~(\ref{eq:sys}) contain terms with opposite monotonicity behavior. 
The growth term $u w(u)$ and the decay term $- \gamma u v$ act oppositely for $u$ in the equation for $u_{,t}$ just like the inflow term $j[u]$ and the decay term $- \eta (1-u) v$ do for $v_{,t}$. 

\begin{remark}\label{rem:zuw}
The particular choice of the growth rate makes $w(u_\mathrm{min})=0$ and $w(1)=0$, and it is positive between the zeros. 
Furthermore $w$ behaves asymptotically like $1-u$ for large $u$, and we find that $w$ is increasing in the interval $[0,u_\mathrm{min}]$. 
Thus, the minimal value $w(u)$ for $u\in[0,1]$ is $w(0)=-u_\mathrm{min}/\kappa$.
\end{remark}

Opposite to the classical Lotka-Volterra model, the Allee effect allows a population to become extinct.
Besides, the Allee effect does not influence qualitatively the system behavior for larger values $u$.

\begin{remark}\label{rem:zuw2}
Eq.~(\ref{eq:growth}) norms the capacity of the logistic growth to 1 because $w(u) <0$ for all $u>1$. 
There is no loss of generality because the normalization of $u$ is a pure scaling. 
A possible $u$ with $u(t, \mathbf{x}) >1$ at some $\mathbf{x}$ decays in finite time below 1.
Due to this realistic property of the model, system~(\ref{eq:sys}) is suitable only for $u(t, \mathbf{x}) \leq1$. 
\end{remark}

The non-local inflow term is
\begin{align}\label{eq:inflow}
j[u](\mathbf{x})= \delta \chi_{\Theta}(\mathbf{x}) \int_\Omega u(t, \mathbf{x}) \,  \mathrm{d} \mathbf{x} = \delta  \chi_{\Theta}(\mathbf{x}) U(t) \;\mbox{ where }\;U(t)=\int_\Omega u(t,\mathbf{x}) \,  \mathrm{d} \mathbf{x}
\end{align}
is the total amount of virus, and $\chi_{\Theta}(\mathbf{x})$ is a non-negative function with $\mathrm{supp}\, \chi_\Theta (\mathbf{x}) = \Theta \subset \Omega$ and 
\begin{align}\label{eq:chi1}
\int_\Omega \chi_{\Theta}(\mathbf{x}) \, \mathrm{d} \mathbf{x}= \int_\Theta \chi_{\Theta}(\mathbf{x}) \, \mathrm{d} \mathbf{x}=1.
\end{align}
As a realistic inflow, we consider $\chi_\Theta$ to be at least a bounded and piecewise continuous function. 

A non-smooth example for $\chi_\Theta$ is the characteristic function on the subdomain $\Theta \subset \Omega$ providing
$1/ \lvert \Theta \rvert$ for ${\mathbf{x}}\in\Theta$ and $0$ elsewhere. 

Analogously, to Eq.~(\ref{eq:inflow}), we define the integral of the non-negative $v$ over~$\Omega$ as
\begin{align}\label{eq:V}
V(t)= \lVert v(t,\mathbf{x})\rVert_{L^1(\Omega)} = \int_\Omega v(t, \cdot)   \,\mathrm{d }\mathbf{x}.
\end{align}

This expression gives the total amount of T~cells in $\Omega$ and is important for the harm of an infected organism.

\begin{remark}\label{rem:zuj}
Since the integral over $\chi_{\Theta}(\mathbf{x})$ is $1$, we see that the total inflow of T~cells
\begin{align}\label{eq:chi1lgm}
J=\int_\Omega j[u]({\mathbf{x}})\, \mathrm{d} \mathbf{x} = \int_\Theta \delta \chi_\Theta (\mathbf{x})  U(t) \, \mathrm{d} \mathbf{x}=\delta U(t) 
\end{align}
is proportional to the total amount of virus.
\end{remark}

This property of the model reflects the virus-depending strength of the immune response. 
The proportionality in Eq.~(\ref{eq:chi1lgm}) contains the monotonous increase of strength of the immune system when the total amount of virus increases. 

The total amount $U(t)$ of virus at the time instant $t$ occurs in Eq.~(\ref{eq:inflow}) and results in the non-local inflow term in the reaction diffusion system in Eq.~(\ref{eq:sys}). 
Consequently, the model equations in Eq.~(\ref{eq:sys}) are only meaningful if the integral in Eq.~(\ref{eq:inflow}) exists and is finite, i.\,e.\, if $u(t,\cdot)\in L^1(\Omega)$. 
We show in Sec.~\ref{sec:lpbounds}, that the solutions $u$ and $v$ stay in $L^1(\Omega)$ after they are once in $L^1(\Omega)$. 
So in particular, we show therewith that no blow-up in $L^1(\Omega)$ will occur, cf.\ Sec.~\ref{sec:lpbounds}.
These results will imply that both, the total amount of virus and T~cells are bounded in the model. 

For this investigation, we have a closer look on the mechanisms in model (\ref{eq:sys}). 
The reaction terms in system~(\ref{eq:sys}) contain oppositely acting mechanisms. 
For $u$, the growth $u w(u)$ leads to an increase of $u$ for $u \in (u_\mathrm{min},1)$. 
As an opposite effect, the term $- \gamma uv$ describes a decrease depending on $v$. 
The equations for $v_{,t}$ contains three mechanisms.
First, $v$ increases with the total amount of $u$ in the domain $\Omega$. 
The increase of~$v$ is space-depending and takes place in a subdomain $\Theta \subset \Omega$.
The second mechanism is a decrease $- \eta v$, which depends linearly on $v$. 
As a third mechanism, the term $\eta uv$ corresponds to $- \gamma uv$ in the first equation, compare the classical Lotka Volterra system. 

Fig.~\ref{fig:statediagram} shows a state chart of the local reaction mechanisms. 
It is simplified and abstracts from the space dependency of the increase of $v$ by the inflow term $j[u]$.

\begin{figure}[tbh]
\begin{center}
\includegraphics[width= 0.9\textwidth]{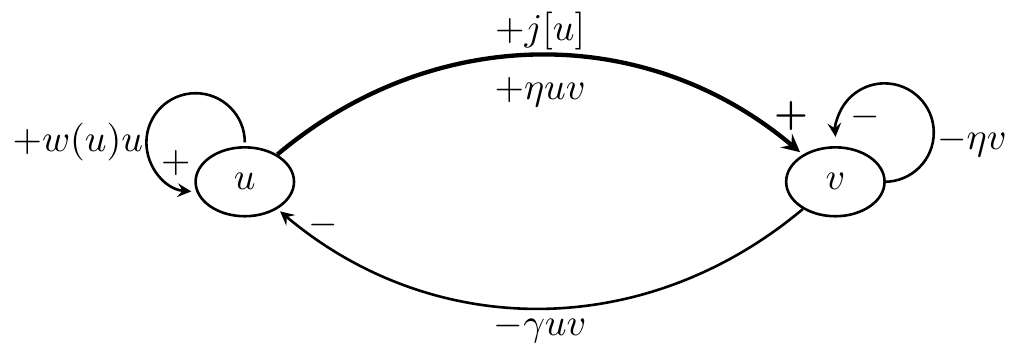}
\caption{State chart for the reaction mechanisms of system~(\ref{eq:sys}) for realistic $u  \in (u_\mathrm{min},1)$. The influence of $u$ is positive on both populations $u$ and $v$. In contrast, the influence of $v$ on both populations is negative. Additional to the dynamics of the classical predator-prey model, there is a positive influence on $v$ just depending on $u$, compare the thicker line. This might lead to an unbounded growth of $v$, what is part of our discussion. 
}
\label{fig:statediagram}
\end{center}
\end{figure}

The non-local inflow term $j[u]$ is a considerate expansion of the classical Lotka Volterra system because the growth of the predator depends directly on the prey in Eq.~(\ref{eq:sys}). 
That enforces the feedback loop in the way, that an increasing predator population slows down its own growth by diminishing the prey population in $u$, compare $(-)$ in Fig.~\ref{fig:statediagram}.

The interplay of oppositely acting mechanisms leads to interesting solutions. 
We observe in \cite{kerl_reaction_2012} that the system~(\ref{eq:sys}) has solutions which can be divided into two qualitative different types. 
On the one hand, there are solutions tending towards zero.
On the other hand, we find solutions with a tendency towards a stationary state which is spatially inhomogeneous. 
The used parameters and the shape and size of the domain $\Omega$ control towards which stationary state the solution is tending.
See~\cite{kerl_reaction_2012, reisch_chemotactic_2019, reisch_entropy_2020, reisch_reaktions-diffusions-gleichungen_2020} for further details on the analytical results.  

\begin{figure}[hbt]
\begin{center}
\includegraphics[width=0.95\textwidth]{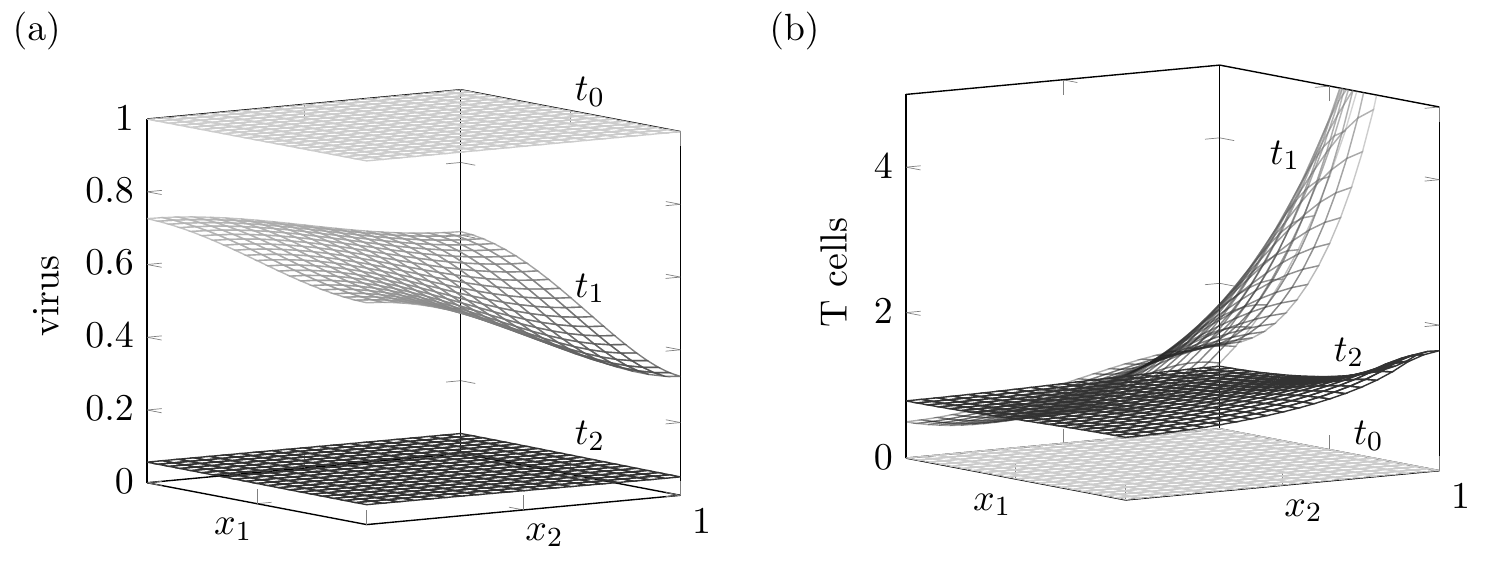}
\caption{Numerical simulation with a solution interpreted as healing infection course. Used parameter values are $u_\mathrm{min}=0.05$, $\kappa=0.01$, $\gamma=0.9$, $\delta=3.7$, $\eta=0.2$, $\alpha=0.6$ and $\beta=0.3$.
The initial conditions for $t_0=0$ (bright mesh) show the amount of virus and T cells right after the activation of the immune response. 
In (b) T~cells enter the domain through an area $\Theta$ around $(x_1, x_2)=(1,1)$.
The virus is killed by the T~cells and decays for $t_1=0.75$ and $t_2=3$ (dark mesh). The amount of T~cells reduces due to the very low virus concentration. Both population vanish after an active phase. }
\label{fig:healing}
\end{center}
\end{figure}

As the model was found in the context of liver infections, we interpret the two qualitative different solution types as different infection courses. 
Solutions with a tendency towards zero are associated with healing courses, see Fig.~\ref{fig:healing}. 
The immune system is able to kill all infected cells during an active phase and therefore, the virus vanishes.
Afterwards, the immune reaction fades out and the T~cell amount tends towards zero as well, see Fig.~\ref{fig:healing}(f).

\begin{figure}[hbt]
\begin{center}
\includegraphics[width=0.95\textwidth]{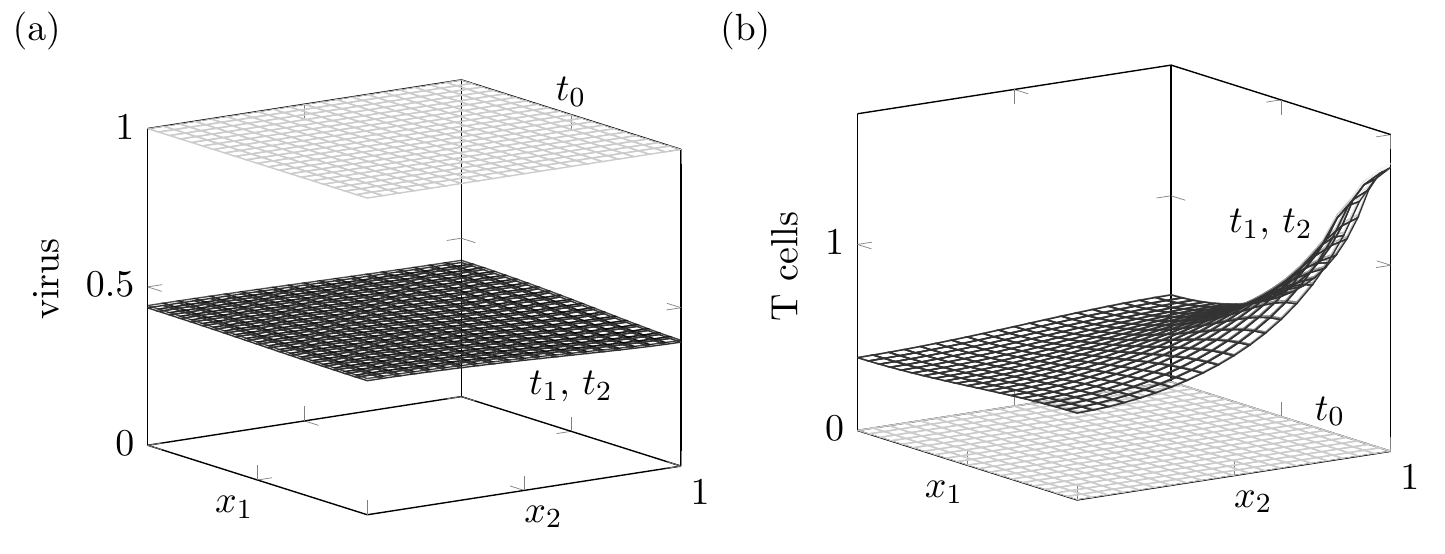}
\caption{Numerical simulation with a solution interpreted as chronic or persisting infection course. Used parameter values are the same as in Fig.~\ref{fig:healing} but $\delta=0.7$ and $\eta=0.9$. The time steps are $t_0=0$ (bright), $t_1 = 10.5$, $t_2=30$ (dark).
Starting with the same initial conditions as in Fig.~\ref{fig:healing}, the virus and the T~cells persist in the whole domain. The T~cell amount is higher around the portal field $\Theta$. There is nearly no difference between the spread at $t_1$ and at $t_2$.}
\label{fig:chronic}
\end{center}
\end{figure}

Solutions with tendency towards stationary spatially inhomogeneous states are interpreted as persisting or chronic infections, compare Fig.~\ref{fig:chronic}. 
After an active phase with a strong immune reaction in Fig.~\ref{fig:chronic}(d), the T~cell amount decays, but does not vanish and the virus persists in the liver. 
In the stationary phase, there is still virus in the whole domain $\Omega$, see Fig.~\ref{fig:chronic}(e), and T~cells as well, see Fig.~\ref{fig:chronic}(f). 

In addition to Fig.~\ref{fig:healing} and~\ref{fig:chronic}, where space-dependent solutions for a fixed time are displayed, Fig.~\ref{fig:sumplot} shows the trajectories of the total virus $U(t)$ and T~cell populations $V(t)$ of different infection courses over the time. 

\begin{figure}[bht]
\begin{center}
\includegraphics[width=\textwidth]{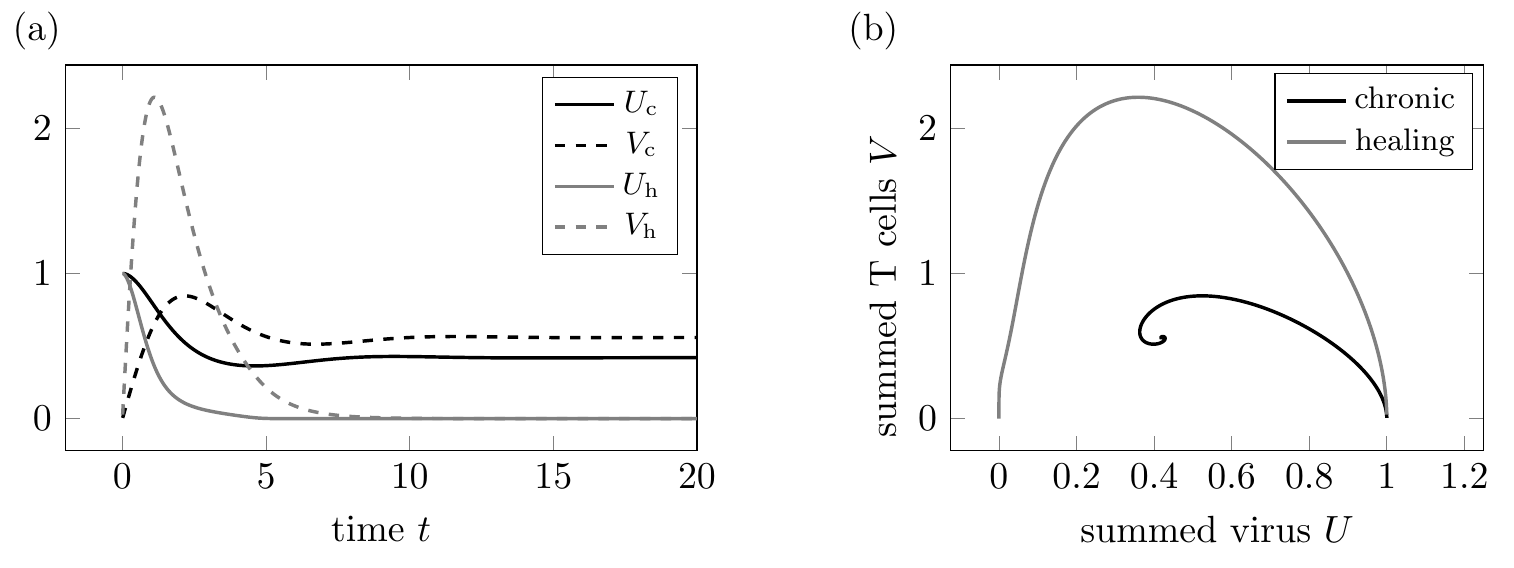}
\caption{Numerical simulations according to those in Fig.~\ref{fig:healing} and Fig.~\ref{fig:chronic}.
(a) Total virus~$U$ and T~cell amount $V$ during a chronic or healing infection course over the time. (b) Summed dynamics of a healing or chronic infection course in phase space. }
\label{fig:sumplot}
\end{center}
\end{figure}

Fig.~\ref{fig:sumplot} shows, that the total populations tend towards a stationary state in both cases. 
Together with the space dependent Fig.~\ref{fig:chronic}, Fig.~\ref{fig:sumplot} shows the tendency of the solution towards a spatially inhomogeneous stationary distribution for a chronic infection course.

\section{Existence}\label{sec:existence}

The model in Eq.~(\ref{eq:sys}) reflects biological structures, see Remark~\ref{rem:scale}, and uses a non-local and space-dependent term for modeling the biological structure of the application. 
The analysis of this model leads to an interesting new problem which cannot be handled easily by standard approaches. 
Besides, we are interested in a proof using interpretable intermediate steps for gaining a deeper understanding of the systems dynamics. 

Of course, there are many theorems for the existence of a solution of a reaction diffusion system or more general a parabolic partial differential equation.
In this section, we mention some important results on the existence of solutions for reaction diffusion equations and explain, why they cannot be applied directly to the system~(\ref{eq:sys}). 

There are at least two main approaches often used in proofs of existence theorems for parabolic partial differential equations. 
One approach uses fixed point theorems, like the Banach fixed point theorem, the Brouwer fixed point theorem and from this following the Schauder and the Leray-Schauder fixed point theorems, \cite{evans_partial_2010}. 
The second approach uses semigroup theory, see \cite{amann_linear_2019, lunardi_analytic_2012}. 

The first approach using fixed point theorems can be found for example in \cite[p. 536]{evans_partial_2010}. 
There, the existence and uniqueness of solutions is shown under the requirement, that the local reaction function $\mathbf{f}$ is Lipschitz continuous with respect to $\mathbf{q}=(u,v)^\mathrm{T}$. 
This requirement is used for showing the contraction of the operator for the fixed point theorem. 
Additionally, the theorem in \cite{evans_partial_2010} requires Dirichlet boundary conditions. 

In \cite[p. 188]{roubicek_nonlinear_2013}, an existence theorem for a reaction diffusion system with Lotka Volterra reaction terms is shown. 
The proof is based on the Schauder fixed point theorem and uses a-priori bounds for the state variables. 

There are several proofs for monotonous reaction functions as well, see~\cite[p. 120]{showalter_monotone_1997}.

Unfortunately, the reaction functions in Eq.~(\ref{eq:sys}) are neither globally Lip\-schitz continuous with respect to $u$ and $v$, nor monotonous. 
Even if $u$ is bounded by construction by an upper limit 1, an a-priori upper bound for $v$ is not obvious. 
We show the existence of a global upper bound in Sec.~\ref{sec:linfty}. 

Existence results using a semigroup approach are based on limited growth conditions, for example \cite[p. 276]{lunardi_analytic_2012} or \cite[p. 75]{henry_geometric_1981}.
Due to the non-local integral term, the nonlinear terms and the unavailable a-priori bound for $v$, the system in Eq.~(\ref{eq:sys}) does not fulfill the requirements for these existence results. 
As already mentioned, the existence of a finite a-priori bound for $v$ and therefore the boundedness of $v$ in the $L^\infty$-norm is a relevant question concerning the application in modeling liver infections. 

Results for reaction diffusion systems with non-local effects can be divided into results for nonlinear diffusion and nonlinear reaction terms. 
The global existence of solutions for systems with nonlinear diffusion,
\begin{align*}
	u_{i,t} =  f_i (u_1, \dots u_m)+ \Delta \varphi_i (u_i) 
\end{align*}
with homogeneous Dirichlet boundary conditions is shown in \cite{laamri_global_2017}. 
The results yield if the solutions are non-negative and the total mass is controlled. 
Additionally, an a-priori estimate in the $L^1(\Omega)$-norm for the nonlinear reaction functions is required. 

In \cite{rouchon_universal_2003}, the reaction diffusion equation
\begin{align*}
	u_{,t} = \Delta u + \int_\Omega u^p \, \mathrm{d} y
\end{align*}
with a non-local term and with homogeneous Dirichlet boundary conditions is analyzed. 
The global existence of non-negative solutions is shown for any $p>1$. 

As a third example, the global existence of solutions of the general formulation
\begin{align*}
	u_{,t} + A(u) = F(u),
\end{align*}
where $A$ is a parabolic operator and $F$ is bounded in the $L^2(\Omega)$-norm is shown in \cite{anguiano_asymptotic_2010}. 

The results are mainly for single equations instead of systems, and the requirements are not fulfilled for system~(\ref{eq:sys}). 
Again, the system with coupled equations and an integral term require new approaches for proving the existence of globally bounded solutions. 

The adaption of the named existence theorems on our system~\ref{eq:sys} requires - if possible at all - severe modifications on a technical mathematical level.
However, by proving the longterm existence, we aim to develop a deeper understanding of the infection application. 
Therefore, we present a step by step proof and accompany it by biological and medical applications.

Now, we show the existence of solutions and their boundedness in $L^\infty(\Omega)$, which allows a point-wise estimation of the maximal virus and T~cell amounts. 
The section has the following structure. 
First, the existence of a weak solution for a small time span $[0,T)$ is shown. 
We discuss some basic properties of such solutions like non-negativity of $u$ and $v$ and boundedness of $u$. 
These properties are important for modeling purposes as negative values are not interpretable in the context of densities of virus and T~cells. 

In Sec.~\ref{sec:lpbounds}, the boundedness of $v$ in $L^1(\Omega)$ is shown.
This result shows a boundedness of the total amount of virus and T~cells at a certain time. 
Afterwards and building up on this result, the boundedness of the norm $\lVert v \rVert_{L^2(\Omega)}$ is proven. 

Finally, in Sec.~\ref{sec:linfty} the boundedness of the norm $\lVert v \rVert_{L^\infty(\Omega)}$ is shown and using it, the global longterm existence of weak solutions of Eq.~(\ref{eq:sys}) is shown.

\subsection{Properties of the weak solution}

Starting with the definition of a weak solution, the existence of a weak solution of Eq.~(\ref{eq:sys}) for a small time span $[0,T)$ is shown.

\begin{definition}
A weak solution of (\ref{eq:sys}) on the time-interval $[0,T)$ is a pair of functions $(u,v)$ with $u,v\in L^2([0,T);H^1(\Omega))$ and 
$u_{,t},v_{,t}\in L^2([0,T);H^{-1}(\Omega))$ for which
\begin{align*}
\begin{aligned}
\int_\Omega u_{,t} \varphi \, \mathrm{d} \mathbf{x}&=\int_\Omega  u w(u) \varphi - \gamma u v\varphi  - \alpha \nabla u \cdot \nabla \varphi \, \mathrm{d} \mathbf{x} , \\
\int_\Omega v_{,t} \varphi \, \mathrm{d} \mathbf{x}&=\int_\Omega  j[u] \varphi- \eta  (1 -u) v\varphi    - \beta \nabla v \cdot \nabla\varphi \, \mathrm{d} \mathbf{x}
\end{aligned}
\end{align*}
is fulfilled for all $\varphi \in H^1(\Omega)$ with $\varphi=\varphi(\mathbf{x})$ and almost every time $t\in[0,T)$.
\end{definition}

In \cite[Theorem 9.2.2, p. 536]{evans_partial_2010} the existence of a unique weak solution of a reaction diffusion system 
\begin{align*}
\dot{\mathbf{q}} = \mathbf{f}(\mathbf{q}) + D \Delta \mathbf{q} & \quad \text{with } \mathbf{q}= \begin{pmatrix} u \\ v \end{pmatrix}, \quad D = \begin{pmatrix}  \alpha & 0 \\ 0 &  \beta \end{pmatrix}
\end{align*}
with Dirichlet boundary conditions and a Lipschitz continuous reaction function $\mathbf{f}$ with respect to $\mathbf{q}$ is proven. 
The first step of the proof shows the existence of a weak solution in case of an externally given function $\mathbf{h}(t)= \mathbf{f}(\mathbf{q}(t))$ replacing the reaction terms. 
Additionally, it is shown in this step, that the time derivative of the solution is a $L^2(\Omega)$-function as well.
Even if the reaction diffusion system (\ref{eq:sys}) has Neumann boundary conditions and the reaction function does not fulfill global Lipschitz conditions with respect to the state variables, this step is adaptable by the following considerations.

Regarding a solution $\mathbf{q}  \in C([0,T); L^2(\Omega, \mathbb{R}^2))$, which is bounded in a suitable chosen time interval $t \in [0,T)$. 
Define $\mathbf{h}(t)= \mathbf{f}(\mathbf{q}(t))$ as a right-hand side for the general parabolic system
\begin{align}\label{eq:parabolic}
    \begin{aligned}
        \mathbf{q}_{,t} - D \Delta \mathbf{q} &=  \mathbf{h}(t)  \ \ && \text{ for } \mathbf{x} \in \Omega, 0<t < T, \\
        \nabla \mathbf{q} \cdot \mathbf{n} & = \mathbf{0}  \ \ && \text{ for } \mathbf{x} \in \partial \Omega, 0<t < T, \\
         \mathbf{q} (0,\mathbf{x}) & = \mathbf{q}_0(\mathbf{x})  \ \ && \text{ for } \mathbf{x} \in  \Omega . \\
     \end{aligned}
\end{align}
Due to the boundedness of $\mathbf{q}$ in the limited time interval and the smoothness of $\mathbf{f}$, the function $\mathbf{h}$ is regular in the sense, that $\mathbf{h} \in L^2( [0,T); L^2(\Omega, \mathbb{R}^2))$.

In \cite[Theorem 3, p. 378]{evans_partial_2010}, the existence of a weak solution for systems like in Eq.~(\ref{eq:parabolic}) but with Dirichlet boundary conditions is shown. 
By replacing the Sobolev space $H^1_0(\Omega)$ by $H^1(\Omega)$ and changing some of the constants, a completely analogous proof assures the existence of a weak solution in case of homogeneous Neumann boundary conditions.

\begin{theorem}\label{ex_loc}
Let $u_0 , v_0 \in L^\infty (\Omega)$ and $T>0$ such that $u(t), v(t) \in L^\infty (\Omega)$ for $t \in [0,T)$. Then, there exists a weak solution $u(t,\cdot), v(t, \cdot) \in L^2((0,T), H^1(\Omega))$ with $u_{,t}(t,\cdot), v_{,t}(t, \cdot) \in L^2((0,T), H^{-1}(\Omega))$ of system~(\ref{eq:sys}). 
\end{theorem}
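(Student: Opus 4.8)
The plan is to construct the weak solution as a fixed point of the linear solution operator associated with the parabolic system~(\ref{eq:parabolic}), using the $L^\infty$-hypothesis to replace the globally non-Lipschitz reaction $\mathbf{f}$ by a globally Lipschitz surrogate. Since Theorem~\ref{ex_loc} only asks for a solution on an interval where $u,v$ are a priori bounded, say $\|u(t)\|_{L^\infty}, \|v(t)\|_{L^\infty}\le M$ on $[0,T)$, I would first fix a cut-off and define $\tilde{\mathbf{f}}$ agreeing with $\mathbf{f}$ on $\{|u|,|v|\le M\}$ and modified (e.g.\ frozen) outside, so that $\tilde{\mathbf{f}}$ is globally Lipschitz and bounded. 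The non-local inflow $j[u]=\delta\chi_\Theta(\mathbf{x})U(t)$ is kept unchanged, since it is already globally Lipschitz from $L^2$ to $L^2$ (see below). Any weak solution of the modified system that stays in the $M$-ball is then a weak solution of~(\ref{eq:sys}), and the hypothesis guarantees exactly this.

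On $X=C([0,T);L^2(\Omega,\mathbb{R}^2))$ define $\mathcal{T}(\mathbf{q})$ to be the weak solution of~(\ref{eq:parabolic}) with frozen right-hand side $\mathbf{h}(t)=\tilde{\mathbf{f}}(\mathbf{q}(t))$, which is well-posed by the Neumann-adapted version of the linear existence result stated before Theorem~\ref{ex_loc}; as noted there, $\mathbf{h}\in L^2([0,T);L^2(\Omega,\mathbb{R}^2))$, so $\mathcal{T}(\mathbf{q})$ lies in $L^2((0,T);H^1(\Omega))$ with time derivative in $L^2((0,T);H^{-1}(\Omega))$, exactly the regularity claimed. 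For two inputs $\mathbf{q}_1,\mathbf{q}_2$, subtracting the linear problems for $\mathcal{T}(\mathbf{q}_1)$ and $\mathcal{T}(\mathbf{q}_2)$, testing the difference against itself and applying Gronwall gives a stability estimate of the form $\|\mathcal{T}(\mathbf{q}_1)-\mathcal{T}(\mathbf{q}_2)\|_X \le C(T)\,\tilde{L}\,\|\mathbf{q}_1-\mathbf{q}_2\|_X$, where $\tilde{L}$ is the global Lipschitz constant of $\tilde{\mathbf{f}}$ and $C(T)\to 0$ as $T\to 0$. Hence $\mathcal{T}$ is a contraction on $X$ for small $T$ (or, for arbitrary $T$, after passing to the equivalent weighted norm $\sup_t e^{-\lambda t}\|\cdot\|_{L^2(\Omega)}$ with $\lambda$ large), and Banach's fixed-point theorem yields a unique fixed point $\mathbf{q}=(u,v)$, which is the desired weak solution.

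The step deserving the most attention is the non-local inflow $j[u]$, as it is precisely the term that pushes the system outside the textbook settings reviewed in Sec.~\ref{sec:existence}. It is, however, benign in the fixed-point estimate because it is linear: using that $\Omega$ is bounded, the embedding $L^2(\Omega)\hookrightarrow L^1(\Omega)$ gives $|U_1(t)-U_2(t)| = \left| \int_\Omega (u_1-u_2)\,\mathrm{d}\mathbf{x} \right| \le |\Omega|^{1/2}\|u_1-u_2\|_{L^2(\Omega)}$, so that $\|j[u_1]-j[u_2]\|_{L^2(\Omega)} \le \delta\|\chi_\Theta\|_{L^2(\Omega)}|\Omega|^{1/2}\|u_1-u_2\|_{L^2(\Omega)}$, a finite contribution to $\tilde{L}$ that is absorbed into $C(T)$ or $\lambda$. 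The genuine role of the $L^\infty$-hypothesis is thus to linearize the difficulty: it confines the trajectory to the region where the growth term $uw(u)$ and the bilinear predation and decay terms $-\gamma uv$, $-\eta(1-u)v$ are Lipschitz, so that the constructed fixed point never sees the truncation and solves the original system~(\ref{eq:sys}) on all of $[0,T)$.
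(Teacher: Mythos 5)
Your proposal is correct and takes essentially the same route as the paper: the paper's proof likewise consists of the Banach fixed-point scheme of \cite[Theorem 9.2.2]{evans_partial_2010} combined with the linear parabolic existence result \cite[Theorem 3, p.~378]{evans_partial_2010} adapted to homogeneous Neumann conditions (replacing $H^1_0(\Omega)$ by $H^1(\Omega)$), using the $L^\infty$-hypothesis to ensure the frozen right-hand side $\mathbf{h}(t)=\mathbf{f}(\mathbf{q}(t))$ lies in $L^2([0,T);L^2(\Omega,\mathbb{R}^2))$. Your explicit cut-off of $\mathbf{f}$ and the Lipschitz estimate for the non-local term $j[u]$ simply make precise details that the paper leaves implicit in its citation-based argument.
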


The proof follows \cite[Theorem 9.2.2, p. 536]{evans_partial_2010} and \cite[Theorem 3, p. 378]{evans_partial_2010} with the mentioned adaptions of the boundary conditions. 

The interpretation of the theorem fits with the observation in nature: In a finite time, there cannot be an infinite amount of virus or T~cells at a single point. 
The maximal amount of virus and T~cells is bounded for a time interval $[0,t)$. 

The weak solution $(u,v)$ of Eq.~(\ref{eq:sys}) fulfills some basic properties. 

\begin{lemma}[Non-negativity]\label{lem:posi}•
If $u_0(\mathbf{x}) \geq 0 $ and $v_0(\mathbf{x})\geq0$ for all $\mathbf{x} \in \Omega$, then $u(t,\mathbf{x}) \geq 0$ and $v(t,\mathbf{x}) \geq 0$ yield for all $t \in(0,T)$ and all $\mathbf{x} \in \Omega$. 
\end{lemma}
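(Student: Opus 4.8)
\emph{Approach.} The guiding idea is that the reaction terms keep the first quadrant $\{u\ge 0,\,v\ge 0\}$ invariant: the reaction in the $u$-equation carries an explicit factor $u$ and hence vanishes on $\{u=0\}$, while on $\{v=0\}$ the reaction in the $v$-equation reduces to the inflow $j[u]=\delta\chi_\Theta U(t)$, which is non-negative as soon as $u\ge 0$. For a weak solution the rigorous realisation of this invariant-region heuristic is the energy method applied to the negative parts. I would introduce $u^-:=\max\{-u,0\}$ and $v^-:=\max\{-v,0\}$; since $u,v\in L^2((0,T);H^1(\Omega))$, their negative parts lie in the same space and are admissible test functions for almost every $t$. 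Throughout I use the $L^\infty$-bound $\|u(t)\|_{L^\infty},\|v(t)\|_{L^\infty}\le M$ on $[0,T)$ that is built into the hypotheses of Theorem~\ref{ex_loc}.

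\emph{Main estimate.} Testing the two weak equations with $u^-$ and $v^-$, and using the elementary identities $u\,u^-=-(u^-)^2$, $\nabla u\cdot\nabla u^-=-|\nabla u^-|^2$ together with the chain rule $\langle u_{,t},u^-\rangle=-\tfrac12\frac{\mathrm{d}}{\mathrm{d}t}\|u^-\|_{L^2(\Omega)}^2$ (a standard Stampacchia/Lions result, likewise for $v$), I obtain
\begin{align*}
\tfrac12\frac{\mathrm{d}}{\mathrm{d}t}\|u^-\|_{L^2(\Omega)}^2+\alpha\|\nabla u^-\|_{L^2(\Omega)}^2 &=\int_\Omega\bigl(w(u)-\gamma v\bigr)(u^-)^2\,\mathrm{d}\mathbf{x},\\
\tfrac12\frac{\mathrm{d}}{\mathrm{d}t}\|v^-\|_{L^2(\Omega)}^2+\beta\|\nabla v^-\|_{L^2(\Omega)}^2 &=-\int_\Omega j[u]\,v^-\,\mathrm{d}\mathbf{x}-\eta\int_\Omega(1-u)(v^-)^2\,\mathrm{d}\mathbf{x}.
\end{align*}
Dropping the non-negative gradient terms, I would bound both right-hand sides by $C\bigl(\|u^-\|_{L^2(\Omega)}^2+\|v^-\|_{L^2(\Omega)}^2\bigr)=:C\,\Phi(t)$. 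Here $-\gamma v\le\gamma v^-\le\gamma M$ controls the coupling in the first line, $-\eta(1-u)=\eta(u-1)\le\eta M$ controls the last term in the second line, and the inflow term is either non-positive (when $U(t)\ge 0$, where $j[u]\ge 0$) or, when $U(t)<0$, estimated through $|U(t)|\le\|u^-\|_{L^1(\Omega)}\le|\Omega|^{1/2}\|u^-\|_{L^2(\Omega)}$ so that it is again of order $C\Phi$. Since $u_0,v_0\ge 0$ give $\Phi(0)=0$, Gronwall's inequality forces $\Phi\equiv 0$ on $[0,T)$, i.e. $u,v\ge 0$. (Equivalently one may prove $u\ge 0$ first, using only the $L^\infty$-bound on $v$, and then deduce $j[u]\ge 0$ to close the estimate for $v$.)

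\emph{Main obstacle.} The delicate term is $\int_\Omega w(u)(u^-)^2\,\mathrm{d}\mathbf{x}$, because $w$ from Eq.~(\ref{eq:growth}) is singular at $u=-\kappa$ and is \emph{not} bounded above on all of $(-\infty,0)$, so a naive $L^\infty$-bound on $w(u)$ is unavailable. What saves the argument is that $w$ already has the favourable sign where it is needed: on $(-\kappa,0)$ one has $1-u>0$, $u-u_\mathrm{min}<0$ and $u+\kappa>0$, hence $w(u)<0$ and this part of the integrand is $\le 0$. The only troublesome region is a neighbourhood of the singularity $u=-\kappa$, which I would remove in one of two equivalent ways: either by recalling that, as stressed in Remark~\ref{rem:zuw2}, the model is meaningful only for $u\in[0,1]$, so one fixes once and for all a bounded Lipschitz extension of $w$ to $u<0$ (agreeing with Eq.~(\ref{eq:growth}) on a neighbourhood of $[0,1]$, in the spirit of Remark~\ref{rem:zuw}), for which $w(u)\le C$ and the estimate is immediate; or by proving the bound for such a regularisation $w_\varepsilon$ and passing to the limit $\varepsilon\to0$. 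Either route is rigorous and, crucially, leaves intact the interpretation that the multiplicative factor $u$ in the virus reaction is exactly the mechanism preventing a negative virus density, the solution never actually using the extension.
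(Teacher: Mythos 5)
Your proposal is correct, but it takes a genuinely different route from the paper. The paper argues pointwise via a minimum principle: it looks at the spatial minimum point $\mathbf{x}_\mathrm{min}$, observes that when $u(t,\mathbf{x}_\mathrm{min})=0$ the reaction $uw(u)-\gamma uv$ vanishes there (thanks to the multiplicative factor $u$) while $\alpha\Delta u(t,\mathbf{x}_\mathrm{min})\ge 0$, and similarly that when $v(t,\mathbf{x}_\mathrm{min})=0$ the terms $j[u]\ge 0$, $\eta(1-u)v=0$ and $\beta\Delta v\ge 0$ all push upward; hence neither variable can cross zero. That argument is short and directly interpretable, but it implicitly treats $(u,v)$ as classical solutions (pointwise evaluation of the Laplacian at an interior minimum and a ``touching'' argument in time). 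Your Stampacchia-type energy method -- testing with $u^-$, $v^-$, and closing a Gronwall estimate for $\|u^-\|_{L^2(\Omega)}^2+\|v^-\|_{L^2(\Omega)}^2$ -- works at exactly the regularity level supplied by Theorem~\ref{ex_loc}, namely $u,v\in L^2((0,T);H^1(\Omega))$ with time derivatives in $L^2((0,T);H^{-1}(\Omega))$, so it is the more rigorous argument for weak solutions; your sign bookkeeping (including the case distinction on the sign of $U(t)$ in the inflow term) is accurate. The price you pay, and correctly identify, is that your method integrates $w(u)(u^-)^2$ over the region $\{u<0\}$, where $w$ from Eq.~(\ref{eq:growth}) is singular at $u=-\kappa$; the paper's proof never meets this issue because it only evaluates the reaction at the single point where $u=0$. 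Your fix (a bounded Lipschitz extension of $w$ for $u<0$, or a regularisation $w_\varepsilon$) is legitimate, but note it proves non-negativity for the \emph{modified} system; to transfer the conclusion to the original system you should add one sentence, e.g.\ a continuation argument (on the maximal time interval where $u\ge-\kappa/2$ the two systems' reaction terms admit the same bound, so $u^-\equiv 0$ there, and the interval is therefore all of $[0,T)$), or an appeal to the fact that the weak formulation of the original system is only meaningful while $u$ stays away from $-\kappa$. With that small patch your proof is complete and, unlike the paper's, fully consistent with the weak-solution framework.
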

\begin{proof} 
Regard the point $\mathbf{x}_\mathrm{min} \in \Omega$, where one state variable has its minimal value. 
If at one time $t$ the minimum $\displaystyle \min_{\mathbf{x}\in\Omega} u(t,\mathbf{x})=u(t,\mathbf{x}_{\mathrm{min}})=0$ touches the lower bound of the positive domain, then Eq.~(\ref{eq:sys}) provides that the reaction term 
\begin{align*}
u(t,\mathbf{x}_{\mathrm{min}})w(u(t,\mathbf{x}_{\mathrm{min}}))-\gamma u(t,\mathbf{x}_{\mathrm{min}})v(t,\mathbf{x}_{\mathrm{min}})=0
\end{align*}
 vanishes at the point $\mathbf{x}_{\mathrm{min}}$ of the minimum. 
 At the same time, $\Delta u(t,\mathbf{x}_{\mathrm{min}})\ge 0$ at this point. 
 Thus, $u$ cannot pass zeros, and stays non-negative.
 
Similarly, if $\displaystyle \min_{\mathbf{x}\in\Omega} v(t,\mathbf{x})=v(t,\mathbf{x}_{\mathrm{min}})=0$, then $j[u](\mathbf{x}_{\mathrm{min}})\ge 0$, $\eta (1-u(t,\mathbf{x}_{\mathrm{min}}))v(t,\mathbf{x}_{\mathrm{min}})=0$ and $\Delta v(t,\mathbf{x}_{\mathrm{min}})\ge 0$, and $v$ stays non-negative as long as it exists, too.
\end{proof}

Consequently, the model reflects the fact that the amount of virus $u$ and of T~cells $v$ are non-negative. 

Next, we proof that the amount of virus is bounded point-wise. 

\begin{lemma}[Boundedness of $u$]\label{lem:boundu}
If $u_0(\mathbf{x})\leq 1 $ for all $\mathbf{x} \in \Omega$, then the (weak) solution $u(t,\mathbf{x})$ is bounded by $u(t,\mathbf{x}) \leq 1$ for all $t \in (0,T) $ and $\mathbf{x} \in \Omega$. 
\end{lemma}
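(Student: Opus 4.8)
The plan is to prove the invariance of the region $\{u\le 1\}$ under the dynamics, mirroring the barrier argument used for the non-negativity in Lemma~\ref{lem:posi}. The decisive structural ingredient is Remark~\ref{rem:zuw2}, which guarantees $w(1)=0$ and $w(u)<0$ for all $u>1$; combined with the non-negativity $u,v\ge 0$ of Lemma~\ref{lem:posi}, the full reaction term $u\,w(u)-\gamma u v$ of the first equation is non-positive as soon as $u$ reaches the threshold value $1$. Intuitively, $u\equiv 1$ acts as a supersolution: the local growth switches off exactly at $u=1$ and turns into decay above it, while the predation $-\gamma u v$ can only push $u$ further down.

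The interpretation-oriented, pointwise version of the argument inspects a spatial maximum $\mathbf{x}_{\max}$ of $u(t,\cdot)$ at which $u(t,\mathbf{x}_{\max})=1$. There the growth term vanishes since $u\,w(u)=w(1)=0$, the predation contributes $-\gamma\,v(t,\mathbf{x}_{\max})\le 0$, and $\alpha\,\Delta u(t,\mathbf{x}_{\max})\le 0$ because $\Delta u\le 0$ at an interior maximum. Hence $u_{,t}(t,\mathbf{x}_{\max})\le 0$ and the maximum of $u$ cannot be driven across the level $1$.

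To make this rigorous at the level of weak solutions I would test the weak formulation of the $u$-equation with $\varphi=(u-1)_+=\max\{u-1,0\}$, which is an admissible element of $H^1(\Omega)$ by the Stampacchia chain rule. Using $\nabla u\cdot\nabla(u-1)_+=|\nabla(u-1)_+|^2$ and $\tfrac12\frac{\mathrm d}{\mathrm dt}(u-1)_+^2=u_{,t}\,(u-1)_+$, this yields
\begin{align*}
\frac{1}{2}\frac{\mathrm d}{\mathrm dt}\int_\Omega (u-1)_+^2\,\mathrm d\mathbf{x}
=\int_\Omega u\,w(u)\,(u-1)_+-\gamma u v\,(u-1)_+-\alpha\,|\nabla(u-1)_+|^2\,\mathrm d\mathbf{x}\le 0,
\end{align*}
because on $\{u>1\}$ one has $u\,w(u)<0$ while the predation and diffusion terms are non-positive by non-negativity and by sign. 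Since $u_0\le 1$ gives $(u_0-1)_+=0$, the non-negative quantity $\int_\Omega(u-1)_+^2\,\mathrm d\mathbf{x}$ starts at zero and is non-increasing, hence stays zero, so $u(t,\cdot)\le 1$ almost everywhere.

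The main obstacle is purely the weak-solution regularity: the pointwise maximum principle is only heuristic, and one must either invoke a rigorous parabolic maximum principle or, as above, pass to the energy estimate with the truncation $(u-1)_+$. The homogeneous Neumann condition is handled cleanly in the latter route, since the boundary term produced by integrating the diffusion against $(u-1)_+$ vanishes by the zero-flux condition $\nabla u\cdot\mathbf{n}=0$; this also removes the worry that the maximum of $u$ might be attained on $\partial\Omega$ rather than in the interior.
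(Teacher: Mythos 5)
Your proposal is correct, and its first half is exactly the paper's own proof: the paper argues precisely as you do that at a spatial maximum with $u(t,\mathbf{x}_{\mathrm{max}})=1$ the growth term $u\,w(u)$ vanishes (Remark~\ref{rem:zuw}), the predator term $-\gamma u v$ is non-positive by non-negativity, and $\alpha\Delta u\le 0$ at the maximum, so the maximum cannot cross the level $1$. Where you genuinely go beyond the paper is the second half: the paper stops at this pointwise maximum-principle argument, which is strictly speaking only heuristic for the weak solutions the lemma is stated for (it presupposes enough regularity to evaluate $\Delta u$ pointwise and to track the maximum in time). Your Stampacchia-type energy estimate with the truncation $\varphi=(u-1)_+$ closes exactly this gap: it works directly with the weak formulation, uses only $\nabla u\cdot\nabla(u-1)_+=|\nabla(u-1)_+|^2$, the sign of $u\,w(u)$ on $\{u>1\}$ from Remark~\ref{rem:zuw2}, and the non-negativity from Lemma~\ref{lem:posi}, and concludes $u\le 1$ a.e.\ from $(u_0-1)_+=0$. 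The only technical point worth stating explicitly in your write-up is that, since $u_{,t}\in L^2([0,T);H^{-1}(\Omega))$, the term $\int_\Omega u_{,t}(u-1)_+\,\mathrm{d}\mathbf{x}$ is a duality pairing and the identity $\tfrac12\tfrac{\mathrm{d}}{\mathrm{d}t}\|(u-1)_+\|_{L^2(\Omega)}^2=\langle u_{,t},(u-1)_+\rangle$ is the standard chain-rule lemma for such functions; with that citation your argument is fully rigorous, and in fact more rigorous than the paper's. What the paper's route buys is brevity and the interpretation-oriented picture (the carrying capacity acting as a barrier); what yours buys is a proof valid at exactly the regularity level the lemma claims.
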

\begin{proof}
Regard the maximum value $\displaystyle \max_{\mathbf{x}\in\Omega} u(t,\mathbf{x})=u(t,\mathbf{x}_{\mathrm{max}})$. 
If this maximum is equal to $1$, then the growth term $uw(u)$ vanishes at $\mathbf{x}_{\mathrm{max}}$, see Remark~\ref{rem:zuw}, the predator term $-\gamma u v$ is not positive, and the diffusion term $\alpha\Delta u$ is not positive, too, cf.\ proof of Lemma~\ref{lem:posi}. 
Consequently, the maximum $\displaystyle \max_{\mathbf{x}\in\Omega} u(t,\mathbf{x})$ cannot grow above $1$.
\end{proof}

According to Remark~\ref{rem:zuw2}, the model in Eq.~(\ref{eq:sys}) is not suitable for values of $u$ larger than $1$. 
Initial conditions with $u_0>1$ does not affect the boundedness of $u$ by $1$.

\begin{corollary}\label{cor:durcheins}
If $u_0(\mathbf{x})>1 $ for some $\mathbf{x} \in \Omega$, there exists a time $t_1$ with $u(t, \mathbf{x}) \leq 1$ for all $t> t_1$.
\end{corollary}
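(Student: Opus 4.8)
The plan is to bound the supremum of $u$ by comparing it with a scalar ordinary differential equation, using that the growth rate is strictly negative above the carrying capacity. By Lemma~\ref{lem:posi} we may assume $u,v\ge 0$, so in the equation for $u_{,t}$ the predator term $-\gamma u v$ is non-positive and can be discarded in any upper estimate; the only remaining drivers are the growth term $u\,w(u)$ and the diffusion $\alpha\Delta u$. First I would turn the sign information of Remark~\ref{rem:zuw2} into a quantitative estimate. Since the factor $(u-u_\mathrm{min})/(u+\kappa)$ is increasing and positive (Remark~\ref{rem:zuw}), it is bounded below on $\{u\ge 1\}$ by $c_0:=(1-u_\mathrm{min})/(1+\kappa)>0$, and therefore
\[
u\,w(u)=u(1-u)\,\frac{u-u_\mathrm{min}}{u+\kappa}\le -c_0\,u(u-1)\le -c_0\,(u-1)\qquad\text{for all }u\ge 1 .
\]
This is the key inequality: once $u>1$, the growth term alone produces a decrease that is at least linear in the excess $u-1$.

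Following the style of the proof of Lemma~\ref{lem:boundu}, I would then track $M(t)=\max_{\mathbf{x}\in\Omega}u(t,\mathbf{x})$. At a maximizing point one has $\Delta u\le 0$ and $-\gamma u v\le 0$, so as long as $M(t)>1$ the displayed estimate gives $\dot M(t)\le -c_0\,(M(t)-1)$. Comparison with the linear ODE $\dot m=-c_0(m-1)$ started at $m(0)=\|u_0\|_{L^\infty(\Omega)}$ yields $M(t)-1\le(\|u_0\|_{L^\infty(\Omega)}-1)\,e^{-c_0 t}$ while $M(t)>1$, and Lemma~\ref{lem:boundu} guarantees that once the solution reaches the level $1$ it never rises above it again.

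Because $M(t)$ need not be differentiable and the solution is only weak, I would make this rigorous by testing the weak formulation with $(u-1)_+$ rather than following the pointwise maximum. With the homogeneous Neumann condition the diffusion contributes $-\alpha\|\nabla(u-1)_+\|_{L^2(\Omega)}^2\le 0$, the predator term is non-positive, and the growth term is controlled by the key inequality, so that
\[
\tfrac12\,\frac{d}{dt}\,\|(u-1)_+\|_{L^2(\Omega)}^2\le -c_0\,\|(u-1)_+\|_{L^2(\Omega)}^2 ,
\]
and Gr\"onwall's inequality gives $\|(u-1)_+(t)\|_{L^2(\Omega)}\le\|(u_0-1)_+\|_{L^2(\Omega)}\,e^{-c_0 t}$.

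The hard part will be the word ``finite''. Both arguments show that the overshoot $u-1$ decays \emph{exponentially}, but they do not by themselves produce a finite time at which it vanishes identically: because $w'(1)=-c_0\neq 0$, the equilibrium $u\equiv 1$ acts as a linearly stable sink, so it is approached asymptotically rather than reached in finite time. A strict finite-time return to $\{u\le 1\}$ would require a sink that is sublinear near $u=1$, which the logistic--Allee structure of $w$ does not provide. Hence the honest output of the estimates is $\|(u-1)_+(t)\|_{L^2(\Omega)}\to 0$ as $t\to\infty$, and I would either read the corollary in this asymptotic sense (the model becomes admissible up to an exponentially small overshoot) or supply an additional, strictly negative mechanism---e.g.\ a lower bound on $v$ at the maximizing point---to upgrade the linear sink to a genuinely finite-time one.
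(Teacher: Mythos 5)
Your estimates are correct as far as they go, and your closing diagnosis is exactly right: this is a genuine gap, not a cosmetic one. By discarding the predator term $-\gamma u v$ in your very first step, you are left with $u\,w(u)$ as the only decay mechanism above the carrying capacity, and since $w$ vanishes linearly at $u=1$ this yields only exponential relaxation of $(u-1)_+$ toward zero — $u$ approaches $1$ asymptotically but need never drop below it at any finite time. The corollary asserts the existence of a finite $t_1$ with $u(t,\mathbf{x})\le 1$ for all $t>t_1$, so the proposal as written proves a strictly weaker, asymptotic statement.

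The paper's proof closes precisely this gap using the term you threw away. While $\max_{\mathbf{x}\in\Omega}u(t,\mathbf{x})>1$, the total virus load satisfies $U(t)>0$, so the inflow $j[u]=\delta\chi_\Theta(\mathbf{x})\,U(t)$ is strictly positive and $v$ is increasing; in particular $v$ becomes strictly positive. At a maximizing point $\mathbf{x}_{\mathrm{max}}$ one then has $\alpha\Delta u\le 0$ and $u\,w(u)<0$ as in your argument, but additionally $-\gamma u v\le -\gamma v<0$, a negative contribution that does \emph{not} degenerate as $u\to 1^{+}$. Hence the maximum of $u$ decreases at a rate bounded away from zero and passes the value $1$ at a finite time $t_1$ (``with a non-zero time derivative,'' as the paper puts it), after which Lemma~\ref{lem:boundu} prevents $u$ from ever exceeding $1$ again. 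This is exactly the ``additional, strictly negative mechanism --- a lower bound on $v$ at the maximizing point'' that you name in your last sentence: the fix you propose \emph{is} the paper's proof, but you would need to carry it out (showing that the positivity of $j[u]$ keeps $v$ bounded below near the maximum set, so the crossing rate cannot degenerate) rather than leave it as an alternative reading of the statement.
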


\begin{proof}
Again, we regard the maximum $\displaystyle \max_{\mathbf{x}\in\Omega} u(t,\mathbf{x})=u(t,\mathbf{x}_{\mathrm{max}})$. 
If it is larger than $1$, the logistic growth $uw(u)$ is strictly negative at the point $\mathbf{x}_{\mathrm{max}}$. 
Since $u$ has its maximum at $\mathbf{x}_{\mathrm{max}}$, the diffusion term fulfills $\alpha\Delta u\le 0$. 
At the same time, $v$ is increasing, so that the predator term $-\gamma u v$ is larger than $0$, and the maximum $\displaystyle \max_{\mathbf{x}\in\Omega} u(t,\mathbf{x})$ passes the value $1$ with a non-zero time derivative at a finite time instant $t_1$.
\end{proof}

The proof shows that $u$ does not tend to $1$, but rather passes $1$. <
That means that the virus decays under its capacity, whenever an active immune response exists.
We formulate this observation in a next corollary saying that $u$ becomes smaller than $1$ together with a non-vanishing $v$ on some sub-domain of~$\Omega$.

\begin{corollary}\label{cor:wegvoneins}
All bounded and non-vanishing initial values allow to find a time instant $t_2$ for which $u(t_2,{\mathbf{x}})\le 1$ holds true for all 
${\mathbf{x}}\in\Omega$ and $U(t_2)<(1- \varepsilon)\lvert \Omega \rvert$ yields for $\varepsilon>0$. 
At the same time, $v$ is not vanishing at points $\mathbf{x} \in \Omega$ where $u$ is smaller than $1$.
\end{corollary}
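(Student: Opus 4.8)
The plan is to decompose the statement into three claims and to dispatch each with the maximum-principle reasoning already employed in Lemmas~\ref{lem:posi} and~\ref{lem:boundu}. The existence of a time $t_2$ with $u(t_2,\mathbf{x})\le 1$ for all $\mathbf{x}$ is immediate: by Lemma~\ref{lem:boundu} this holds for every $t_2$ when $u_0\le 1$, and by Corollary~\ref{cor:durcheins} it holds for every $t_2$ past the crossing time $t_1$ when $u_0>1$ on part of $\Omega$. The substance of the corollary is therefore the strict deficit $U(t_2)<(1-\varepsilon)\lvert\Omega\rvert$ together with the simultaneous activity of $v$ on the set where $u<1$.

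First I would show that the immune response is strictly positive on the portal field. Since the initial data are non-vanishing, $U(0)=\lVert u_0\rVert_{L^1(\Omega)}>0$, and by continuity $U(t)>0$ on an interval around the active phase; I would choose $t_2$ inside this interval (and, in the case $u_0>1$, past $t_1$, which the proof of Corollary~\ref{cor:durcheins} identifies with a phase in which $v$ is increasing). On this interval the inflow $j[u]=\delta\chi_\Theta U(t)$ is strictly positive on the interior of $\Theta$, so the minimum-principle argument of Lemma~\ref{lem:posi}, now driven by a strictly positive source, yields $v(t_2,\mathbf{x})>0$ for $\mathbf{x}$ in the interior of $\Theta$ even when $v_0$ vanished there: at a zero of $v$ the equation for $v_{,t}$ reduces to $j[u]+\beta\Delta v\ge j[u]>0$.

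Next I would convert the positivity of $v$ into the strict bound $u<1$ by tracking the deficit $z=1-u\ge 0$. At an interior point $\mathbf{x}_0\in\Theta$ with $z(t_2,\mathbf{x}_0)=0$ one has $u=1$, hence $w(u)=0$ by Remark~\ref{rem:zuw}; since $z\ge 0$ attains a spatial minimum there, $\Delta z(t_2,\mathbf{x}_0)\ge 0$, and the transformed equation gives $z_{,t}(t_2,\mathbf{x}_0)=\alpha\Delta z+\gamma u v\ge\gamma\,v(t_2,\mathbf{x}_0)>0$. This would force $z<0$ immediately before $t_2$, contradicting $z\ge 0$; hence $u(t_2,\mathbf{x})<1$ at every interior point of $\Theta$ where $v>0$, which is exactly the third assertion. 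As this set has positive measure and $u\le 1$ elsewhere, $\int_\Omega(1-u)\,\mathrm{d}\mathbf{x}$ is strictly positive, so $U(t_2)=\lvert\Omega\rvert-\int_\Omega(1-u)\,\mathrm{d}\mathbf{x}<\lvert\Omega\rvert$, and any $\varepsilon\in(0,\,1-U(t_2)/\lvert\Omega\rvert)$ satisfies $U(t_2)<(1-\varepsilon)\lvert\Omega\rvert$.

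I expect the main obstacle to be not the value of $\varepsilon$, which comes for free once $u<1$ on a set of positive measure, but the bookkeeping that guarantees the two windows overlap: I must pick $t_2$ late enough that $u\le 1$ has already been reached, yet early enough that the virus—and therefore the inflow driving $v$—has not yet died out, so that $v$ is genuinely positive on $\Theta$ at that instant. A secondary technical point is that these pointwise maximum-principle computations are performed for a weak solution; as in the preceding lemmas I would rely on parabolic interior regularity to justify evaluating $\Delta z$ and $z_{,t}$ pointwise on the interior of $\Theta$.
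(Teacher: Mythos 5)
Your proof is correct at the paper's level of rigor, but it is organized differently from the paper's own argument. The paper case-splits on whether $u(t_1,\cdot)$ is spatially constant: in the generic, non-constant case the strict deficit $U(t_1)<(1-\varepsilon)\lvert\Omega\rvert$ is read off directly from the position-dependency of $u\le 1$, with no dynamics at all, and the dynamical mechanism (inflow $j[u]>0$ makes $v$ grow on $\Theta$, whence the predator term $-\gamma uv$ pushes $u$ below $1$ there) is invoked only for the degenerate case $u\equiv 1$ and for the clause about $v$. You drop the case split and run that dynamical mechanism uniformly: inflow positivity gives $v>0$ on $\Theta$; the deficit function $z=1-u$ with the touching-point computation $z_{,t}=\alpha\Delta z+\gamma uv\ge\gamma v>0$ at a point with $u=1$ forces $u<1$ strictly wherever $v>0$ in $\Theta$; and integrating the resulting strict deficit produces $\varepsilon$. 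Your route buys a unified argument and a single quantitative source for $\varepsilon$ valid in all cases; its price is the window-overlap bookkeeping you flag, which the paper sidesteps in the generic case. That bookkeeping closes in one line: at the crossing time $t_1$ of Cor.~\ref{cor:durcheins} one has $\max_{\mathbf{x}\in\Omega}u(t_1,\mathbf{x})=1$ (and $U(0)>0$ when $t_1=0$, since the initial data are non-vanishing), so $U(t_1)>0$, and continuity of $U$ yields a window $[t_1,t_1+\tau)$ of strictly positive inflow; any $t_2$ in this window serves. One cosmetic caution: $\mathrm{supp}\,\chi_\Theta=\Theta$ only guarantees $\chi_\Theta>0$ on a dense subset of $\Theta$, so your positivity claims for $j[u]$, for $v$, and hence the strict bound $u<1$ should be asserted on the set where $\chi_\Theta>0$ rather than on the interior of $\Theta$ --- the same implicit assumption the paper makes when it calls the predator term ``strictly negative in $\Theta$''.
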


\begin{proof} 
If the function $u(t,\cdot)$ is not identical to $\displaystyle \max_{\mathbf{x}\in\Omega} u(t,\mathbf{x})$, then Cor.~\ref{cor:durcheins} provides $t_1$ with $u(t,\mathbf{x}) \leq 1 $ for all $t>t_1$.
Due to the position-dependency of $u$, there exists $\varepsilon > 0$ with $U(t_1) < (1- \varepsilon) \lvert \Omega \rvert$.

For space-independent functions $\displaystyle u(t_1,\mathbf{x})=\max_{\mathbf{x}\in\Omega} u(t_1,\mathbf{x})$, only the situation 
\begin{align*}
u(t_1,\mathbf{x})=\max_{\mathbf{x}\in\Omega} u(t_1,\mathbf{x})=1
\end{align*}
is interesting. 
If $u(t, \mathbf{x}) > 1$ for all $\mathbf{x} \in \Omega$, the reaction terms lead to a decay with $u(t_2, \mathbf{x}) \leq1$ due to Cor.~\ref{cor:durcheins}.
Let $\displaystyle (t_1,\mathbf{x})=\mathrm{argmax}_{\mathbf{x}\in\Omega} u(t_1,\mathbf{x})=1$.

Since the inflow $j[u]$ is positive in both cases, $v$ increases, and the predator term $-\gamma u v$ is strictly negative in $\Theta$ for all $t>t_1$.
Therefore the assertions are fulfilled for every $t_2>t_1$ with sufficiently small $t_2-t_1$.
\end{proof}

In the following, we assume initial conditions $u(0,\mathbf{x}) \leq 1$ for all $\mathbf{x} \in \Omega$. 
As shown in Cor.~\ref{cor:durcheins} and~\ref{cor:wegvoneins} and according to the formulation of system~(\ref{eq:sys}), this is not a restriction. 

We interpret Cor.~\ref{cor:wegvoneins} in the  light of application. 
Even if there would be a higher amount of virus than the upper limit allows, the additional virus vanish by a negative growth term and spread out by diffusion. 
The negative growth can be interpreted as a decay due to a limited number of free liver cells where the virus can attach.

\begin{corollary}\label{cor:Ubeschr}
Let $(u, v)$ be a solution of system~(\ref{eq:sys}) for $t \in (0,T)$ with initial conditions $0 \leq u(0,\mathbf{x}) \leq 1$ and $0 \leq v(0, \mathbf{x}) \leq v_\mathrm{max} < \infty$.
Then, the $L^1(\Omega)$-norm $U(t) =\lVert u(t, \cdot) \rVert_{L^1(\Omega)} $ is bounded by $U(t)  \leq \lvert \Omega \rvert $ for all times $t \in [0,T)$.
\end{corollary}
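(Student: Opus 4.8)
The plan is to deduce the bound on the scalar quantity $U(t)$ directly from the pointwise bounds on $u$ that were already established. From Lemma~\ref{lem:posi}, the assumption $0\le u(0,\cdot)$ together with $0\le v(0,\cdot)$ guarantees $u(t,\mathbf{x})\ge 0$ for all $t\in(0,T)$ and $\mathbf{x}\in\Omega$. From Lemma~\ref{lem:boundu}, the assumption $u(0,\cdot)\le 1$ guarantees $u(t,\mathbf{x})\le 1$ on the same range. Hence $u(t,\cdot)$ takes values in $[0,1]$ for every admissible time, and integrating the pointwise upper bound over the domain immediately gives
\begin{align*}
U(t)=\int_\Omega u(t,\mathbf{x})\,\mathrm{d}\mathbf{x}\le\int_\Omega 1\,\mathrm{d}\mathbf{x}=\lvert\Omega\rvert
\end{align*}
for every $t\in[0,T)$, which is the claimed estimate. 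In this route the hypothesis $0\le v(0,\cdot)\le v_\mathrm{max}$ serves only to place us in the setting of Lemmas~\ref{lem:posi} and~\ref{lem:boundu}, where non-negativity of $v$ is needed to sign the predator term, and, together with Theorem~\ref{ex_loc}, to guarantee a weak solution with the required regularity on $[0,T)$.

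As an alternative, more interpretation-oriented route consistent with the paper's philosophy, I would instead differentiate $U$ in time by testing the weak formulation with $\varphi\equiv 1$. Since $\nabla\varphi=0$, the diffusion contribution drops out automatically (equivalently, $\alpha\int_\Omega\Delta u\,\mathrm{d}\mathbf{x}=0$ by the homogeneous Neumann condition), yielding
\begin{align*}
\dot U(t)=\int_\Omega u\,w(u)\,\mathrm{d}\mathbf{x}-\gamma\int_\Omega u\,v\,\mathrm{d}\mathbf{x},
\end{align*}
where the predator term is non-positive by non-negativity of $u$ and $v$. The delicate point here is that $u\,w(u)$ is strictly positive for $u\in(u_\mathrm{min},1)$, so the growth term alone does not force $\dot U\le 0$; one has to argue at the threshold. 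If $U(t)=\lvert\Omega\rvert$, then the pointwise bound $u\le 1$ forces $u(t,\cdot)\equiv 1$ almost everywhere, whence $w(1)=0$ makes the growth term vanish and $\dot U(t)\le 0$, so $U$ cannot cross $\lvert\Omega\rvert$.

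I expect the first route to be both shorter and more robust, since the entire difficulty is already encapsulated in the pointwise bound of Lemma~\ref{lem:boundu}. In other words, the genuine obstacle, namely controlling the oppositely acting growth and predation terms so that $u$ does not exceed its carrying capacity, has effectively been resolved at the pointwise level before reaching this corollary, and what remains is only a one-line integration of the established bound $0\le u\le 1$ over the fixed domain $\Omega$.
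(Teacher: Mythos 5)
Your first route is exactly the paper's own proof: invoke Lemma~\ref{lem:boundu} for the pointwise bound $u\le 1$ and integrate over $\Omega$; in fact your explicit appeal to Lemma~\ref{lem:posi} for $u\ge 0$ (so that $\lVert u\rVert_{L^1(\Omega)}=\int_\Omega u\,\mathrm{d}\mathbf{x}$) is slightly more careful than the paper, which leaves the non-negativity implicit. Your alternative route via $\dot U$ is a valid sanity check but unnecessary, and your preference for the first, one-line argument matches the paper's choice.
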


\begin{proof}
Due to Lemma~\ref{lem:boundu}, the solution $u(t,\mathbf{x})$ is bounded by 1. 
Integration of both sides of $u \leq 1$ gives $U(t) \leq \lvert \Omega \rvert$.
\end{proof}

With these results, we found a (weak) solution for a time interval $[0,T)$, which is non-negative and at least one component of the solution, namely $u$, is bounded. 
The increase of the second component $v$ depends on the $L^1(\Omega)$-norm $U$ of $u$. 
Hence, until now, $v$ could still grow over all bounds.

Consequently, we have to show that the increase of $v$ happens simultaneously to a decrease of $U$, cf. Fig.~\ref{fig:statediagram}, and that this simultaneity makes $v$ to be bounded in the different norms. 

Since we will need it in the next section for showing that blow-ups of the solution of system~(\ref{eq:sys}) do not occur, we prove that $u$ is not only bounded by $1$ but it is sufficiently remote from $1$ after some time. 
The medical background suggests that a virus density close to $1$ provokes an increase of the immune response. 
Hence, the virus density decreases. 
This slows down the influx of T~cells again, compare the opposite directions of the mechanisms in Fig.~\ref{fig:statediagram}.
The following Lemma~\ref{lem:integral} will give a very rough estimate for this observation. 

But first, we consider the solution $v_\mathrm{aux} = v_\mathrm{aux}(\mathbf{x})$ of the auxiliary stationary problem
\begin{align}\label{eq:aux}
\begin{aligned}
- \beta \Delta v + \eta v& = \chi_\Theta (\mathbf{x})   && \text{ for } \mathbf{x} \in \Omega ,\\
 \nabla v \cdot \mathbf{n} &=0 && \text{ for } \mathbf{x} \in \partial \Omega .
\end{aligned}
\end{align}
The function $\chi_\Theta (\mathbf{x} ) \geq 0 $ is at least piecewise continuous and not vanishing in the whole domain $\Omega$.
Consequently $v_\mathrm{aux}$ is continuous, bounded and positive. 
Since $\chi_\Theta (\mathbf{x} )$ is positive only in the influx region~$\Theta$, there is some value $v_\mathrm{thr} >0 $ with $v_\mathrm{aux}(\mathbf{x}) \geq v_\mathrm{thr}$ for all $\mathbf{x} \in \Theta$.

Therewith, we are prepared to prove the announced Lemma. 

\begin{lemma}\label{lem:integral}
Let $u, v$ be weak solutions of (\ref{eq:sys}). 
For all $\varrho \geq 0 $, there is a $\theta$ with $ 0< \theta<1$ and a time $t_3$ with 
\begin{align}\label{eq:intrho}
\int_\Omega uv^\varrho \, \mathrm{d} \mathbf{x}  \leq \theta \int_\Omega v^\varrho \, \mathrm{d} \mathbf{x}
\end{align}
for all $ t \geq t_3$.
\end{lemma}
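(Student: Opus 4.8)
The goal is the weighted bound $\int_\Omega u v^\varrho \, \mathrm{d}\mathbf{x}\le \theta\int_\Omega v^\varrho\,\mathrm{d}\mathbf{x}$, and since $0\le u\le 1$ by Lemma~\ref{lem:boundu}, this is equivalent to a uniform lower bound on the $v^\varrho$-weighted average of $1-u$,
\begin{align*}
\int_\Omega (1-u)\,v^\varrho\,\mathrm{d}\mathbf{x}\ \ge\ (1-\theta)\int_\Omega v^\varrho\,\mathrm{d}\mathbf{x}.
\end{align*}
I would first fix a small $\sigma\in(0,1)$ and split $\Omega$ into $A_\sigma=\{u\le 1-\sigma\}$ and its complement $\{u>1-\sigma\}$. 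On $A_\sigma$ one has $u v^\varrho\le(1-\sigma)v^\varrho$, so everything reduces to controlling the weight $\int_{\{u>1-\sigma\}}v^\varrho\,\mathrm{d}\mathbf{x}$ sitting on the set where $u$ is close to $1$: if this weight is a uniformly small fraction of $\int_\Omega v^\varrho$, the claim follows with $\theta$ slightly above $1-\sigma$.

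The mechanism that should force this is predation: where $v$ is large, the term $-\gamma uv$ in the $u$-equation keeps $u$ away from $1$. To make this quantitative I would use that the growth obeys $w(u)\le L_0(1-u)$ on $[0,1]$ with $L_0=(1-u_\mathrm{min})/(u_\mathrm{min}+\kappa)$, which follows from Remark~\ref{rem:zuw}, so that $g=1-u\ge 0$ satisfies the differential inequality $g_{,t}\ge \alpha\Delta g-L_0 g+\gamma(1-g)v$. Evaluated at a point where $u$ attains its maximum one has $\Delta g\ge 0$, and if there $v\ge\lambda$ with $\lambda>L_0\sigma/(\gamma(1-\sigma))$, then $g_{,t}>0$, i.e.\ the maximum of $u$ is pushed down. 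Together with Corollary~\ref{cor:wegvoneins}, which already yields strictness of $u<1$ once $v$ is active, this should produce a time $t_3$ and a level $\lambda$ after which $\{v\ge\lambda\}\subseteq\{u\le 1-\sigma\}=A_\sigma$.

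To guarantee that $v$ is genuinely present, so that the level set $\{v\ge\lambda\}$ carries the bulk of the mass, I would compare $v$ with the auxiliary profile $v_\mathrm{aux}$. Since $0\le u\le1$ gives $v_{,t}\ge \delta U\chi_\Theta-\eta v+\beta\Delta v$, and $v_\mathrm{aux}$ solves the stationary problem~(\ref{eq:aux}), a parabolic comparison shows that for large $t$ the function $v$ on $\Theta$ is bounded below by a multiple of $\delta\, v_\mathrm{aux}$ proportional to a lower envelope of $U(t)$; with $v_\mathrm{aux}\ge v_\mathrm{thr}$ on $\Theta$ this anchors a positive lower bound for $v$ on the inflow region. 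Combining the inclusion with the split then gives $\int_{\{u>1-\sigma\}}v^\varrho\,\mathrm{d}\mathbf{x}\le \lambda^\varrho|\Omega|$, hence $\int_\Omega u v^\varrho\,\mathrm{d}\mathbf{x}\le(1-\sigma)\int_\Omega v^\varrho\,\mathrm{d}\mathbf{x}+\lambda^\varrho|\Omega|$.

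The main obstacle is precisely turning this additive remainder into a multiplicative $\theta<1$, and it is where the oppositely acting mechanisms must be reconciled. The remainder $\lambda^\varrho|\Omega|$ is harmless when $\int_\Omega v^\varrho$ is large, but it dominates in the regime where $v$ is uniformly small; there one must instead argue that small $v$ forces, through the $v_\mathrm{aux}$-comparison, a small $U$ and hence a small virus mass, so that $\int_\Omega u v^\varrho$ is controlled directly. Letting the threshold $\lambda$ depend on the current mass, or separating the decaying and the persistent infection regimes of Sec.~\ref{sec:reacdiff}, seems unavoidable. This coupling between ``$u$ close to $1$'' and ``$v$ large'' — states that cannot coexist, since a large $U$ pumps $v$ up on $\Theta$ while a large $v$ presses $u$ down — is the genuinely delicate point and the reason the resulting estimate is only a rough one.
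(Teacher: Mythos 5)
Your strategy stalls exactly where you say it does, and that unresolved step is not a side issue --- it is the whole content of the lemma. The level-set splitting delivers only the additive bound
\begin{align*}
\int_\Omega u v^\varrho \, \mathrm{d}\mathbf{x} \;\leq\; (1-\sigma)\int_\Omega v^\varrho \, \mathrm{d}\mathbf{x} \;+\; \lambda^\varrho \lvert\Omega\rvert ,
\end{align*}
and an additive remainder can never be absorbed into a multiplicative $\theta<1$ without a uniform positive lower bound on $\int_\Omega v^\varrho\,\mathrm{d}\mathbf{x}$, which you do not have --- and which fails for the healing courses of Sec.~\ref{sec:reacdiff}, where $v\to 0$ while $\lambda^\varrho\lvert\Omega\rvert$ stays fixed. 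The case $\varrho=0$ makes the failure plain: there the remainder equals $\lvert\Omega\rvert$ and the split is vacuous, yet the lemma still claims $U(t)\le\theta\lvert\Omega\rvert$. So the proposal, as written, is a program with the decisive step missing; you acknowledge this (``letting $\lambda$ depend on the current mass \dots{} seems unavoidable'') but do not carry it out.

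The paper never enters this regime because it argues by contradiction rather than directly. If (\ref{eq:intrho}) failed for every $\theta<1$ at arbitrarily late times, then $u$ would have to equal $1$ (essentially) on $\mathrm{supp}\,v$, in particular on $\Theta$, which forces the lower bound $U(t)\ge\lvert\Theta\rvert>0$. This is the point your direct approach cannot reach: the contradiction hypothesis itself manufactures the lower bound on the virus mass. The inflow $j[u]=\delta\chi_\Theta U$ together with comparison against the stationary profile $v_\mathrm{aux}$ of Eq.~(\ref{eq:aux}) then pins $v\ge\delta\lvert\Theta\rvert v_\mathrm{thr}>0$ on $\Theta$ after a transient phase, whence the predation term yields $u_{,t}<0$ on $\Theta$, contradicting $u=1$ there; a second application of the same estimate excludes a later return of $u$ toward $1$ in $\Theta$. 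Your ingredients --- the $v_\mathrm{aux}$ comparison and predation pressing $u$ down where $v$ is large --- are exactly the paper's, but only the contradiction scheme reconciles them: it makes ``$u$ near $1$ where $v$ carries weight'' and ``$v$ small on $\Theta$'' mutually exclusive by construction, so the small-$v$ regime that defeats your additive estimate never has to be analyzed.
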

\begin{proof}
First, we show that there is at least one $t_3$ for which Eq.~(\ref{eq:intrho}) is fulfilled.

Assume, there would be no such $t_3$.
Then, $u$ must be equal $1$ almost everywhere in $\mathrm{supp} \, v \subset \Omega$ for all time $t$.
As a solution of Eq.~(\ref{eq:sys}), $u$ is continuous with respect to $\mathbf{x}$.
Consequently, $u$ must be equal $1$ in $\Theta$ and we get the rough estimate $ U(t) \geq \lvert \Theta \rvert $.
Now, the evolution of $v$ in Eq.~(\ref{eq:sys}) reads
\begin{align*}
v_{,t }= j[u] - \eta (1-u) v + \beta \Delta v \geq \delta U(t) \chi_\Theta (\mathbf{x}) - \eta v + \beta \Delta v,
\end{align*}
and after a transient phase, we get
\begin{align*}
v(t, \mathbf{x}) \geq \delta U(t) v_\mathrm{aux} (\mathbf{x})  \geq \delta \lvert \Theta \rvert v_\mathrm{aux} (\mathbf{x})
\end{align*}
and thus
\begin{align}\label{eq:vthr}
v(t, \mathbf{x}) &\geq \delta \lvert \Theta \rvert v_\mathrm{thr}  \quad \quad \text{ for all } \mathbf{x} \in \Theta .
\end{align}
Finally, the first equation in system~(\ref{eq:sys}) reads 
\begin{align*}
u_{,t}= u w(u) - \gamma u v + \alpha \Delta u \leq u \left( w(u) - \gamma \delta U(t) v_\mathrm{aux} ( \mathbf{x}) \right ) + \alpha \Delta u .
\end{align*}
So, Eq.~(\ref{eq:vthr}) implies $u_{,t}  <0 $ for all $\mathbf{x} \in \Theta$, what contradicts the assumption $u= 1$ in $\Theta$. 
Consequently, there is at least one time instant $t_3$ fulfilling Eq.~(\ref{eq:intrho}). 

If we now assume that $u$ grows again after $t_3$ so that the estimate (\ref{eq:intrho}) is hurt for every $\theta <1$ at some $t_4$, that would mean $u$ gets arbitrarily close to $1$ in $\Theta$.
This is again a contradiction to 
\begin{align*}
w(u) - \gamma \delta U(t) v_\mathrm{aux} (\mathbf{x})  \leq w(u) - \gamma \delta \lvert \Theta \rvert v_\mathrm{aux} (\mathbf{x}) <0
\end{align*}
at this time instant $t_4$.
\end{proof}

In the next steps, we show, that there exists an upper bound for $v$ as well. 
First, we show, that $v$ is bounded in $L^1(\Omega)$ for $t \in (0,T)$. 
Next, we expand this property for all times $T>0$.
As an intermediate step, we show $v(t, \cdot) \in L^2(\Omega)$. 
Finally, by using the stationary solution of another related elliptic equation for a stationary problem, we prove that $v$ is bounded and smooth for all times $t$, 
$v(t, \cdot) \in L^\infty (\Omega)$.

\subsection{$L^p(\Omega)$ bounds}\label{sec:lpbounds}

First, we determine a $L^1(\Omega)$ bound for $v$.
With Theorem~\ref{ex_loc} we have a (weak) solution $(u ,v)$ with $u(t,\cdot), v(t,\cdot) \in H^1(\Omega)$ for $t \in [0, T)$ with a time~$T$. 

In this section, we show, that $V$ is not growing to infinity for $t \in [0,T)$. 

Therefore, we regard the time derivative of the functional
$\Phi=\eta U + \gamma V$ 
which is a linear combination of the $L^1(\Omega)$-norms of $u$ and $v$.
The functional~$\Phi$ can be interpreted as a measure of the total harm of the infection, compare \cite{reisch_modeling_2019}. 

\begin{theorem}\label{thm:sigma} 
Any pair $(U(0),V(0))$ with $0 \leq U(0)\le |\Omega|$ and $0\leq V(0) < \infty$ allows to find a trapezoid  $\Sigma=\{(U,V)\,:\,U\in[0,|\Omega|],V\in[0, V_\mathrm{up}- \frac{\eta}{\gamma} U]\}$ such that $(U(t),V(t))\in\Sigma$ for all $t\in[0,T)$, i.\,e.\ as long as the solution $(u,v)$ of system~(\ref{eq:sys}) exists in $L^1(\Omega)$.
\end{theorem}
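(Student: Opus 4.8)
The plan is to track the single scalar functional $\Phi(t)=\eta U(t)+\gamma V(t)$ introduced just above and to show it is bounded by a constant for all $t\in[0,T)$. The slanted upper edge $V=V_\mathrm{up}-\tfrac{\eta}{\gamma}U$ of the trapezoid is exactly the level set $\Phi=\gamma V_\mathrm{up}$, so a uniform bound on $\Phi$, together with the already established $0\le U\le|\Omega|$ (Cor.~\ref{cor:Ubeschr}) and $U,V\ge0$ (Lemma~\ref{lem:posi}), places $(U(t),V(t))$ inside $\Sigma$. First I would differentiate $U$ and $V$ by testing the weak formulation of~(\ref{eq:sys}) with the constant function $\varphi\equiv1\in H^1(\Omega)$. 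The homogeneous Neumann conditions make the diffusion contributions $\int_\Omega\alpha\Delta u\,\mathrm{d}\mathbf{x}$ and $\int_\Omega\beta\Delta v\,\mathrm{d}\mathbf{x}$ vanish, and Eq.~(\ref{eq:chi1lgm}) turns the inflow into $\int_\Omega j[u]\,\mathrm{d}\mathbf{x}=\delta U$. This yields
\[
\dot U=\int_\Omega uw(u)\,\mathrm{d}\mathbf{x}-\gamma\int_\Omega uv\,\mathrm{d}\mathbf{x},\qquad \dot V=\delta U-\eta V+\eta\int_\Omega uv\,\mathrm{d}\mathbf{x}.
\]

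The decisive observation is that in $\dot\Phi=\eta\dot U+\gamma\dot V$ the two cross terms $-\eta\gamma\int_\Omega uv$ and $+\gamma\eta\int_\Omega uv$ cancel exactly. This cancellation is the whole point of the weights $\eta$ and $\gamma$: it removes the indefinite-sign predator--prey coupling $\int_\Omega uv$, which is precisely the oppositely acting mechanism obstructing a direct estimate of $V$ alone, and, in contrast to the $L^2$- and $L^\infty$-steps to follow, it makes the refined Lemma~\ref{lem:integral} unnecessary here. What remains is $\dot\Phi=\eta\int_\Omega uw(u)\,\mathrm{d}\mathbf{x}+\gamma\delta U-\gamma\eta V$.

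Next I would extract a genuine decay term. Since $V\ge0$ one has $\gamma V=\Phi-\eta U$, hence $-\gamma\eta V=-\eta\Phi+\eta^2U$, and the identity becomes $\dot\Phi=\eta\int_\Omega uw(u)\,\mathrm{d}\mathbf{x}+(\gamma\delta+\eta^2)U-\eta\Phi$. Now $0\le u\le1$ (Lemma~\ref{lem:boundu}) bounds the continuous, bounded integrand $uw(u)$ by its maximum $M=\max_{s\in[0,1]}sw(s)$ over $\Omega$ (recall $w$ is bounded by Remark~\ref{rem:zuw}), and $U\le|\Omega|$ by Cor.~\ref{cor:Ubeschr}, so
\[
\dot\Phi\le\bigl(\eta M+\gamma\delta+\eta^2\bigr)|\Omega|-\eta\Phi=:C-\eta\Phi.
\]
A scalar comparison (Gr\"onwall) argument gives $\Phi(t)\le C/\eta+(\Phi(0)-C/\eta)e^{-\eta t}\le\max\{\Phi(0),C/\eta\}$ for all $t\in[0,T)$. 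Setting $\gamma V_\mathrm{up}:=\max\{\Phi(0),C/\eta\}$ yields $\eta U(t)+\gamma V(t)\le\gamma V_\mathrm{up}$, i.e.\ $V(t)\le V_\mathrm{up}-\tfrac{\eta}{\gamma}U(t)$, and combined with $0\le U(t)\le|\Omega|$ and $V(t)\ge0$ this is exactly $(U(t),V(t))\in\Sigma$, while $(U(0),V(0))\in\Sigma$ is guaranteed by the choice of $V_\mathrm{up}$.

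The computation is short, so the main obstacle is conceptual rather than technical: recognizing that the weighted combination $\eta U+\gamma V$ (and essentially no other) annihilates the cross term $\int_\Omega uv$, and then noticing that one must \emph{not} merely bound the right-hand side by a constant (which would only give linear growth of $\Phi$) but must keep the dissipative $-\eta\Phi$ by using $V\ge0$. The only points requiring minor care are that $\varphi\equiv1$ is an admissible test function on the bounded domain $\Omega$, and that $t\mapsto U(t),V(t)$ are differentiable a.e.\ with $\dot U=\int_\Omega u_{,t}\,\mathrm{d}\mathbf{x}$ and $\dot V=\int_\Omega v_{,t}\,\mathrm{d}\mathbf{x}$, which follows from the regularity $u_{,t},v_{,t}\in L^2((0,T),H^{-1}(\Omega))$ built into the definition of the weak solution.
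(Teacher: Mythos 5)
Your proof is correct and takes essentially the same approach as the paper: the same functional $\Phi=\eta U+\gamma V$, the same exact cancellation of the $\int_\Omega uv$ cross terms, and the same ingredients $0\le u\le 1$, $U\le|\Omega|$, $\int_\Omega j[u]\,\mathrm{d}\mathbf{x}=\delta U$. The only difference is cosmetic: you close via a Gr\"onwall comparison for $\dot\Phi\le C-\eta\Phi$ (after rewriting $-\gamma\eta V=-\eta\Phi+\eta^2 U$), whereas the paper simply observes that $\Phi_{,t}\le 0$ whenever $\Phi\ge\gamma V_\mathrm{up}$ and $U\le|\Omega|$; both arguments yield $\Phi(t)\le\max\{\Phi(0),\gamma V_\mathrm{up}\}$ and hence containment in a (possibly initial-data-dependent) trapezoid $\Sigma$.
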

This theorem says that the $L^1(\Omega)$-norm of a solution $(u,v)$ of system~(\ref{eq:sys}) stays in a bounded region, namely within the trapezoid $\Sigma$, compare Fig.~\ref{fig:sigma}, as long as a weak solution exists. 

\begin{proof}
The time derivative of the functional $\Phi=\eta U + \gamma V$ is with system~(\ref{eq:sys})
\begin{align*}
\begin{aligned}
\Phi_{,t} &=\frac{\mathrm{d}}{\mathrm{d}t} \int_\Omega \eta u + \gamma v \, \mathrm{d} \mathbf{x} = \int_\Omega \eta u_{,t}(t,\mathbf{x}) + \gamma v_{,t}(t,\mathbf{x}) \, \mathrm{d} \mathbf{x}\\
	&= \int_\Omega  \eta u w(u) - \eta \gamma uv + \eta \alpha \Delta u
	+ \gamma j[u] +  \eta \gamma u v - \eta \gamma v + \gamma \beta \Delta v  \, \mathrm{d} \mathbf{x} \\
	&= \int_\Omega  \eta u w(u)+ \gamma j[u]- \eta \gamma v \, \mathrm{d} \mathbf{x} ,
\end{aligned}
\end{align*} 
where we use the divergence theorem and homogeneous Neumann boundary conditions from Eq.~(\ref{eq:sys}). 

The solution $u$ is meaningful for values between $0$ and $1$.
Due to Cor.~\ref{cor:durcheins}, it suffices to regard solutions $u$ bounded by $0$ and $1$, see Lemma~\ref{lem:posi} and~\ref{lem:boundu}. 
Consequently, the growth function $w(u)$ in Eq.~(\ref{eq:growth}) is smaller than $1$ for all $u \in [0,1]$.  
We get 
\begin{align*}
\begin{aligned}
\Phi_{,t}
	&\leq \eta \int_\Omega u(t,\mathbf{x}) \, \mathrm{d} \mathbf{x}  + \gamma \int_\Omega j[u] \, \mathrm{d} \mathbf{x}	- \gamma \eta \int_\Omega v(t, \mathbf{x}) \, \mathrm{d} \mathbf{x} .
\end{aligned}
\end{align*} 

The inflow term $j[u]$ is bounded, see Remark~\ref{rem:zuj}, and with Eqs.~(\ref{eq:inflow}) and~(\ref{eq:V}), we can write 
 \begin{align}\label{eq:ineqUV}
\Phi_{,t}
	\leq \eta U(t) + \gamma \delta U(t)  - \gamma \eta V(t) =  (\eta+ \gamma \delta) U(t) - \gamma \eta V(t),
\end{align}
which is not positive for $U \leq \lvert \Omega \rvert$ and $V \geq \frac{\eta + \gamma \delta}{\gamma \eta} \lvert \Omega \rvert$.
The derivative is non-positive for
\begin{align*}
\Phi \geq \eta  \lvert \Omega \rvert + \gamma  \frac{\eta + \gamma \delta}{\gamma \eta} \lvert \Omega \rvert = \left( \eta + 1 + \frac{\gamma \delta}{\eta} \right) \lvert \Omega \rvert = \gamma V_\mathrm{up}
\end{align*}
and $U \leq \lvert \Omega \rvert$.

Since $U$ stays lower than $\lvert \Omega \rvert$, compare Cor.~\ref{cor:Ubeschr}, $\Phi \geq \gamma V_\mathrm{up}$ implies $\Phi_{,t} \leq 0 $, see Eq.~(\ref{eq:ineqUV}).
In particular, $\Phi$ cannot pass $\gamma V_\mathrm{up}$ when it is once lower than $\gamma V_\mathrm{up}$ with $U \leq \lvert \Omega \rvert$.
Consequently, the $L^1(\Omega)$-norm $(U,V)$ of a solution $(u,v)$ stays in $\Sigma$ when it starts in $\Sigma$.

If now $\Phi(0)= \eta U(0) + \gamma V(0) \leq \gamma V_\mathrm{up}$, then $\Phi(t) \leq \gamma V_\mathrm{up}$ for all admissible $t$.
If otherwise $\Phi(0) \geq \gamma V_\mathrm{up}$, we have shown that $\Phi$ decreases until $\Phi(t)$ is smaller than $\gamma V_\mathrm{up}$. 

Finally, $\Phi (t) \leq \max \{ \Phi(0), \gamma V_\mathrm{up} \} = \gamma V_\mathrm{up}$ for all admissible $t$ and all initial values allow to construct a suitable $\Sigma$ where the solution stays in.
\end{proof}

In Fig.~\ref{fig:sigma}, the trapezoid $\Sigma$ is shown in the phase space of $(U,V)$. 
The arrows show the direction of the dynamics given by the reaction term in system~(\ref{eq:sys}). 
The arrows of the dynamics point inside $\Sigma$ or at least not to the exterior, especially at the upper bound of $V$. 

\begin{remark}\label{rem:L1v}
Since $\lvert \Omega \rvert >0$, the $L^1(\Omega)$-norm $V(t)$ of $v$ is bounded by
\begin{align}\label{eq:vup12}
V_\mathrm{up} = \frac{1}{\gamma} \left(\eta+ 1+ \frac{\gamma \delta}{\eta} \right) \lvert \Omega \rvert
\end{align}
for all $t \in [0, T)$. 
\end{remark}

That means that the summed strength of the immune response is bounded in a bounded time interval.

\begin{remark}
Let $\Sigma$ be the trapezoid in the phase space $(U,V)$, which is bounded by $U=0$, $V=0$, $U=\lvert \Omega \vert$ and $\eta U+ \gamma V  \leq \gamma V_\mathrm{up}$.
If the $L^1(\Omega)$-norms $U(0)= \lVert u_0(\cdot) \rVert_{L^1(\Omega)} $ and $V(0)= \lVert v_0(\cdot) \rVert_{L^1(\Omega)} $ are inside~$\Sigma$, then the linear combination $\eta U(t) + \gamma V(t)$ with $U(t)$ and $V(t)$ as the $L^1(\Omega)$-norms of the weak solutions $u(t,\mathbf{x}), v(t,\mathbf{x})$ of system~(\ref{eq:sys}) with $t \in (0,T)$ is inside of~$\Sigma$ as well. 

The $L^1(\Omega)$-norms $U(T-\varepsilon )$ and $V(T-\varepsilon )$ are for any $\varepsilon >0$ inside of $\Sigma$.
$\Sigma$ depends only on the initial values, but it is independent of the time $t$ and the solutions $u$ and $v$ theirselves.
Consequently, the $L^1(\Omega)$-norms $U(T )$ and $V(T )$ are inside of $\Sigma$ as well.
The values $u(T, \mathbf{x} )$ and $v(T, \mathbf{x} )$ can be seen as new initial data of system~(\ref{eq:sys}). 
By induction, the  $L^1(\Omega)$-norms $U(t)$ and $V(t)$ are inside of $\Sigma$ for every $t>0$ and the $L^1(\Omega)$-norm $V(t)$ of $v(t,\mathbf{x})$ is bounded by $V_\mathrm{up}$ in Eq.~(\ref{eq:vup12}) for all time $t>0$. 
\end{remark}

This results shows the boundedness of the total amount of T~cells at any time, not only for a limited interval. 

\begin{figure}
\begin{center}
\includegraphics[width=0.65\textwidth]{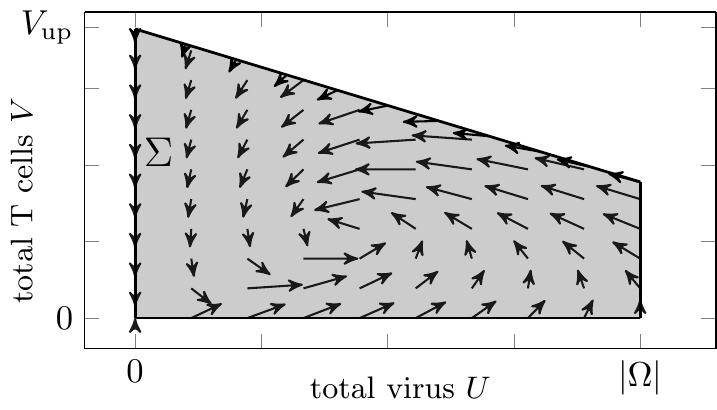}
\caption{Trapezoid $\Sigma$ in the phase space $(U,V)$ of the $L^1(\Omega)$-norms of a solution $(u,v)$ of system~(\ref{eq:sys}). The scaled vector field shows the dynamics of the reaction terms in Eq.~(\ref{eq:sys}). }
\label{fig:sigma}
\end{center}
\end{figure} 

The amount of T~cells $v$ is not only bounded in the sense of $L^1(\Omega)$ but also in the sense of $L^2(\Omega)$. 
This can be shown by regarding the time derivative of the functional 
\begin{align*}
\Psi(t)=\frac{1}{2}\|v(t, \mathbf{x})\|_{L^2(\Omega)}^2= \frac{1}{2} \int_\Omega v^2 \, \mathrm{d} \mathbf{x}.
\end{align*}

\begin{theorem}\label{thm:vL2}
Let $(u,v)$ be a solution of system~(\ref{eq:sys}). 
Then, the $L^2(\Omega)$-norm of $v$ is bounded for all $t >0$. 
\end{theorem}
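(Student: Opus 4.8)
The plan is to estimate the time derivative of $\Psi(t)=\tfrac12\int_\Omega v^2\,\mathrm{d}\mathbf{x}$ and to derive a differential inequality of the form $\Psi_{,t}\le C-c\,\Psi$ with constants $C\ge 0$ and $c>0$, from which boundedness follows at once by comparison with the linear ODE $y'=C-cy$. First I would differentiate and insert the second equation of system~(\ref{eq:sys}),
\begin{align*}
\Psi_{,t}=\int_\Omega v\,v_{,t}\,\mathrm{d}\mathbf{x}=\int_\Omega v\,j[u]\,\mathrm{d}\mathbf{x}-\eta\int_\Omega(1-u)v^2\,\mathrm{d}\mathbf{x}+\beta\int_\Omega v\,\Delta v\,\mathrm{d}\mathbf{x}.
\end{align*}
The divergence theorem together with the homogeneous Neumann boundary conditions turns the diffusion term into $\beta\int_\Omega v\,\Delta v\,\mathrm{d}\mathbf{x}=-\beta\int_\Omega|\nabla v|^2\,\mathrm{d}\mathbf{x}\le 0$, so it may simply be discarded in the estimate.

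The crucial step concerns the decay term, which carries the oppositely acting mechanisms: the factor $1-u$ degenerates as $u\to 1$ and therefore does not by itself provide a usable decay rate. Here Lemma~\ref{lem:integral} with $\varrho=2$ comes into play, furnishing a $\theta\in(0,1)$ and a time $t_3$ with $\int_\Omega u\,v^2\,\mathrm{d}\mathbf{x}\le\theta\int_\Omega v^2\,\mathrm{d}\mathbf{x}$ for all $t\ge t_3$. Splitting $-\eta\int_\Omega(1-u)v^2\,\mathrm{d}\mathbf{x}=-\eta\int_\Omega v^2\,\mathrm{d}\mathbf{x}+\eta\int_\Omega u\,v^2\,\mathrm{d}\mathbf{x}$ and applying the Lemma yields the genuinely negative contribution $-\eta(1-\theta)\int_\Omega v^2\,\mathrm{d}\mathbf{x}=-2\eta(1-\theta)\Psi$ for $t\ge t_3$. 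I expect this to be the main obstacle: without Lemma~\ref{lem:integral} the sign-indefinite reaction term leaves no room for a closed estimate, and it is precisely this lemma that converts the vanishing coefficient $1-u$ into an effective decay rate $\eta(1-\theta)>0$.

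It remains to control the non-local inflow term. Writing $j[u]=\delta\chi_\Theta U$ and using the bound $U(t)\le|\Omega|$ from Cor.~\ref{cor:Ubeschr}, the boundedness of $\chi_\Theta$, and the $L^1(\Omega)$-bound $V(t)\le V_\mathrm{up}$ from Remark~\ref{rem:L1v}, I would estimate
\begin{align*}
\int_\Omega v\,j[u]\,\mathrm{d}\mathbf{x}=\delta\,U\!\int_\Omega v\,\chi_\Theta\,\mathrm{d}\mathbf{x}\le\delta\,|\Omega|\,\|\chi_\Theta\|_{L^\infty(\Omega)}\,V(t)\le\delta\,|\Omega|\,\|\chi_\Theta\|_{L^\infty(\Omega)}\,V_\mathrm{up}=:C_1,
\end{align*}
a constant. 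Alternatively one may apply Young's inequality to the product $v\,\chi_\Theta$, splitting off an arbitrarily small multiple of $\int_\Omega v^2$ that is then absorbed into the decay term; this avoids invoking $V_\mathrm{up}$ but is otherwise equivalent.

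Collecting the three contributions gives, for all $t\ge t_3$, the differential inequality
\begin{align*}
\Psi_{,t}\le C_1-2\eta(1-\theta)\,\Psi,
\end{align*}
so that $\Psi$ cannot exceed $\max\{\Psi(t_3),\,C_1/(2\eta(1-\theta))\}$. Finally, on the initial interval $[0,t_3]$ the local existence result Theorem~\ref{ex_loc} guarantees $v(t,\cdot)\in L^\infty(\Omega)$, whence $\Psi$ is bounded there as well; the two bounds combine to a uniform bound on $\Psi$, and therefore on $\|v(t,\cdot)\|_{L^2(\Omega)}$, for all $t>0$.
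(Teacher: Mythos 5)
Your proposal is correct and takes essentially the same route as the paper's own proof: the same functional $\Psi$, Green's identity with the Neumann conditions to discard the diffusion term, the bound $\delta\,\chi_\mathrm{max}\,\lvert\Omega\rvert\,V_\mathrm{up}$ on the inflow term via Cor.~\ref{cor:Ubeschr} and Remark~\ref{rem:L1v}, and the decisive use of Lemma~\ref{lem:integral} with $\varrho=2$ to turn the degenerate coefficient $1-u$ into the decay rate $2\eta(1-\theta)$, yielding the identical linear differential inequality. The only cosmetic differences are that the paper encodes the decay coefficient as a weighted mean value $\xi(t)$ rather than splitting the integral, and refers to a ``transient phase'' where you treat the interval $[0,t_3]$ explicitly.
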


\begin{proof}
The time derivative of the functional $\Psi$ is
\begin{align*}
\begin{aligned}
\Psi_{,t}&= \frac{\mathrm{d}}{\mathrm{d}t} \int_\Omega \frac{v^2}{2}  \, \mathrm{d} \mathbf{x} 
	= \int_\Omega v \cdot v_{,t} \, \mathrm{d} \mathbf{x} 
	=\int_\Omega v j[u] + v \cdot \eta (u-1) v + v \cdot  \beta \Delta v \, \mathrm{d} \mathbf{x} .
\end{aligned}
\end{align*}

Using Green's first identity and the zero-flux boundary conditions, we get
\begin{align*}
\beta \int_\Omega v  \Delta v \, \mathrm{d} \mathbf{x} 
	= \beta \int_{\partial \Omega} v \nabla v \cdot \mathbf{n} \, \mathrm{d} s - \beta \int_\Omega  \nabla v \cdot \nabla v \, \mathrm{d} \mathbf{x} 
	= - \beta \int_\Omega  \nabla v \cdot \nabla v \, \mathrm{d} \mathbf{x}  \leq 0 .
\end{align*}

Further, Remark~\ref{rem:zuj} provides an estimate for the integral of $j[u]$, which is
\begin{align*}
\Psi_{,t}
	&\leq  \int_\Omega v \cdot j [u] + \eta (u-1) v^2 \, \mathrm{d} \mathbf{x} \leq \delta \chi_\mathrm{max} U \int_\Omega v \, \mathrm{d} \mathbf{x} + \eta \int_\Omega (u-1) v^2 \, \mathrm{d} \mathbf{x} 	
\end{align*}
with $\displaystyle  \chi_\mathrm{max}  = \max_{\mathbf{x} \in \Omega} \chi_\Theta (\mathbf{x})$ according to Eq.~(\ref{eq:chi1}).
Now, Remark~\ref{rem:L1v} assures
\begin{align*}
\Psi_{,t} \leq \delta  \chi_\mathrm{max} \lvert \Omega \rvert V_\mathrm{up}+ \eta  \int_\Omega (u-1) v^2 \, \mathrm{d} \mathbf{x} = M - \eta \xi(t) \int_\Omega v^2 \, \mathrm{d} \mathbf{x}, 
\end{align*}
with the constant $M=\delta  \chi_\mathrm{max} \lvert \Omega \rvert V_\mathrm{up} $ and the weighted mean value $\xi(t)$ defined by 
\begin{align}\label{eq:meanvalue}
 \int_\Omega (1-u) v^2 \, \mathrm{d} \mathbf{x} =  \xi(t) \int_\Omega  v^2 \, \mathrm{d} \mathbf{x}.
\end{align}
The mean value $\xi(t)$ fulfills $0< 1- \theta \leq \xi(t) $ because of Lemma~\ref{lem:integral}. 
Finally, the functional $\Psi$ obeys the linear differential inequality
\begin{align}\label{eq:28}
\Psi_{,t}=\frac{\mathrm{d}}{\mathrm{d}t} \int_\Omega \frac{1}{2} v^2 \, \mathrm{d} \mathbf{x}  
	\leq M - \eta \xi(t) \int_\Omega  v^2 \, \mathrm{d} \mathbf{x}=M-2\eta \xi(t)  \Psi
\end{align}
with a positive decay rate $2 \eta \xi(t)$ which stays remote from 0.  
Eq.~(\ref{eq:28}) is a first order differential inequality, compare \cite{walter_ordinary_1998}, 
and $\Psi (t)$ is bounded by the solution of the linear first order differential equation $y^\prime = M- 2 \eta \xi(t) y$ with $\xi(t) \geq 1- \theta >0$.

Thus, the largest possible accumulation point of $\Psi$ is $\frac{M}{2 \eta (1- \theta)}$, and the functional $\Psi$ is bounded by $\frac{M}{ \eta (1- \theta)}$ after a transient phase.

Later in Sec.~\ref{sec:num}, we will use the estimate
\begin{align}\label{eq:estimate_psi2}
\Psi (t) \leq \Psi (0) \mathrm{e}^{-2 \eta \int_0^t   \xi (s) \, \mathrm{d} s} + \frac{M}{2 \eta (1- \theta)}  \leq \Psi(0) \mathrm{e}^{-2 \eta (1- \theta) t} 
\end{align}
for showing numerically the precision of the estimates. 
\end{proof}

Theorem~\ref{thm:sigma} and~\ref{thm:vL2} show, that the $L^1(\Omega)$- and the $L^2(\Omega)$-norms of $v$ are not only bounded for a time interval $[0,T)$ but for all time $t>0$. 
So in these norms, the solution is not blowing up. 

\subsection{$L^\infty(\Omega)$ bounds and global existence}\label{sec:linfty}
In this section, we show the boundedness of $v$ in $L^\infty(\Omega)$ for all $t>0$. 
With the boundedness of $v(t,\mathbf{x})$, the existence of a solution $(u,v)^\mathrm{T}$ with finite values is shown for all $t>0$.

We will prove, that there exists a value $v_\mathrm{max}$ with $v(t,x)\leq v_\mathrm{max}$ for all $ \mathbf{x} \in \Omega$ and all $ t \in [0, \infty)$.
For this purpose, a stationary problem is defined. 
Let $v^\star=v^\star(\mathbf{x})$ be a solution of 
\begin{align}\label{eq:probstat}
\begin{aligned}
- \beta \Delta v  &=\chi_\Theta (\mathbf{x}) - \frac{1}{\lvert \Omega \rvert}   &&   \text{for } \mathbf{x} \in \Omega, \\
 \nabla v \cdot \mathbf{n} & = 0 &&  \text{for } \mathbf{x } \in \partial \Omega .
\end{aligned}
\end{align}

System~(\ref{eq:probstat}) fulfills the solvability condition because the forces are equalized, see Eq.~(\ref{eq:chi1}) and 
\begin{align*}
\begin{aligned}
\int_\Omega \chi_\Theta (\mathbf{x}) - \frac{1}{\lvert \Omega \rvert}+ \beta \Delta v \, \mathrm{d} \mathbf{x}
	&= \int_\Omega \chi_\Theta (\mathbf{x}) \, \mathrm{d} \mathbf{x} - \int_\Omega  \frac{1}{\lvert \Omega \rvert}  \, \mathrm{d} \mathbf{x} + \beta \int_{\partial \Omega} \nabla v \cdot \mathbf{n} \, \mathrm{d} s\\
	&= 1- \frac{\lvert \Omega \rvert}{\lvert \Omega \rvert} + 0 = 0 .
\end{aligned}
 \end{align*}
 
 \begin{remark}
 Since the right-hand side $\chi_\Theta (\mathbf{x}) - \frac{1}{\lvert \Omega \rvert} $ in Eq.~(\ref{eq:probstat}) is a bounded piecewise continuous function and thus in $L^2(\Omega) \subset H^{-1}(\Omega)$, the existence of a weak solution $v^\star \in H^1(\Omega)$ is ensured, compare \cite{renardy_introduction_2010}. 
 \end{remark}
 
 \begin{remark}\label{rem:starconst}
 The solution $v^\star$ of Eq.~(\ref{eq:probstat}) has a free additive constant as always in pure Neumann problems. 
 In the following, we fix just one $v^\star$ with $\lVert v^\star (\mathbf{x}) \rVert_{L^1(\Omega)} =0$. 
 \end{remark}

  Now, we will show that the population $v= v(t,\mathbf{x})$ in Eq.~(\ref{eq:sys}) does not grow to infinity. 
 Even having already estimates for its $L^1(\Omega)$-norm, cf. Theorem~\ref{thm:sigma}, and for its $L^2(\Omega)$-norm, cf. Theorem~\ref{thm:vL2}, it is not trivial to give a pointwise bound. 
 Before we will do that in the later Theorem~\ref{thm:vinfty}, we collect some auxiliary results about solutions of partial differential equations with homogeneous Neumann boundary conditions. 
 
 \begin{lemma}\label{lemmaA}
 Let $v= v(t,\mathbf{x})$ be the solution of
 \begin{align}\label{eq:lemmaA}
 \begin{aligned}
 v_{,t} &= \beta \Delta v - a(t, \mathbf{x}) v + f(t, \mathbf{x}) &&   \text{for } \mathbf{x} \in \Omega , t>0 ,\\
 \nabla v \cdot \mathbf{n} & = 0 && \text{for } \mathbf{x} \in \partial \Omega, t >0, \\
 v(0, \mathbf{x}) & = 0 && \text{for } \mathbf{x} \in \Omega
 \end{aligned}
 \end{align}
 with $a(t, \mathbf{x}) \geq 0 $ and $\lvert f (t,\mathbf{x}) \rvert \leq C_f$ for all $\mathbf{x} \in \Omega, t >0$. 
 Then $\lvert \Delta v \rvert$ is bounded by a constant $C_c \in \mathbb{R}$ for all $\mathbf{x} \in \Omega$ and $t>0$. 
 \end{lemma}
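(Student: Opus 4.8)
The plan is to represent the solution of~(\ref{eq:lemmaA}) through a Duhamel formula for the Neumann heat semigroup $e^{\tau\beta\Delta}$ and to exploit that a second spatial derivative can be traded for a time derivative of the semigroup. The guiding observation is the frozen-coefficient model case $a\equiv 0$ with a time-independent source $f$: there $v(t)=\int_0^t e^{\tau\beta\Delta}f\,\mathrm{d}\tau$, and since $\Delta e^{\tau\beta\Delta}=\frac{1}{\beta}\frac{\mathrm{d}}{\mathrm{d}\tau}e^{\tau\beta\Delta}$, the time integral telescopes to $\Delta v(t)=\frac{1}{\beta}\bigl(e^{t\beta\Delta}f-f\bigr)$. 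Because the Neumann heat semigroup is an $L^\infty(\Omega)$-contraction, this already gives the uniform bound $\lvert\Delta v\rvert\le 2C_f/\beta$, without ever estimating $v$ itself. The work is to carry this mechanism over to time-dependent $f$ and to the dissipative zeroth-order term.

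First I would split $v$ into its spatial mean $\bar v(t)=\frac{1}{\lvert\Omega\rvert}\int_\Omega v\,\mathrm{d}\mathbf{x}$ and the mean-free fluctuation $\tilde v=v-\bar v$. Since $\Delta\bar v=0$, only $\tilde v$ feeds into $\Delta v$, which is essential: the mean $\bar v$ may grow (at most linearly, by comparison with the supersolution $C_f t$ using $a\ge 0$), while the fluctuation must stay controlled. On the mean-free subspace the Neumann Laplacian has a spectral gap $\lambda_1>0$, so the propagator decays like $e^{-\lambda_1\beta\tau}$ there, and the Duhamel integral converges uniformly in $t$ rather than accumulating. The non-negative coefficient is then used in its dissipative role: replacing $e^{\tau\beta\Delta}$ by the propagator of $\partial_t-\beta\Delta+a$, which remains an $L^\infty$-contraction by the maximum principle (cf.\ the proofs of Lemma~\ref{lem:posi} and~\ref{lem:boundu}), guarantees that the term $-a v$ cannot enlarge the bound.

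The main obstacle is the short-time behaviour of the second-derivative smoothing: $\lVert\Delta e^{\tau\beta\Delta}\rVert$ is only of order $1/\tau$ as $\tau\to 0$, which is not integrable, so a crude operator-norm estimate of the Duhamel kernel fails. I expect to circumvent this exactly as in the frozen-coefficient computation, namely by using the algebraic identity $\Delta e^{\tau\beta\Delta}=\frac{1}{\beta}\frac{\mathrm{d}}{\mathrm{d}\tau}e^{\tau\beta\Delta}$ so that cancellation across frequencies — not a pointwise bound in $\tau$ — produces the convergent contribution. For a genuinely time-varying source this step needs some temporal regularity; here it is provided by the special structure of~(\ref{eq:sys}), where the source has the fixed spatial profile $\chi_\Theta$ and only a bounded, slowly varying scalar amplitude $U(t)$, so that the telescoping applies to the amplitude after integration by parts in time. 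The second delicate point is that $v$ is not uniformly bounded, hence the argument has to be arranged so that only bounded pieces — the source $f$, the contracting propagator, and the mean-free fluctuation — ever enter $\Delta v$, and the growing mean is shown to drop out. The resulting uniform bound on $\lvert\Delta v\rvert$, combined with the $L^2(\Omega)$-bound of Theorem~\ref{thm:vL2} and elliptic regularity together with the embedding $W^{2,p}(\Omega)\hookrightarrow L^\infty(\Omega)$ valid for $d\in\{2,3\}$, is precisely what will later upgrade to the pointwise bound on $v$ in Theorem~\ref{thm:vinfty}.
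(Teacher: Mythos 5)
Your route is genuinely different from the paper's — but the comparison point is thin, because the paper does not actually prove Lemma~\ref{lemmaA}: it sketches a Green's-function argument (estimate the space--time convolution of $\Delta_{\mathbf{x}}G$ with the bounded $f$, leading to error-function terms) and explicitly omits the details. Your Duhamel/telescoping mechanism is correct and elegant in the model case you compute ($a\equiv 0$, time-independent $f$), where $\Delta v(t)=\frac{1}{\beta}\bigl(e^{t\beta\Delta}f-f\bigr)$ indeed gives $\lvert\Delta v\rvert\le 2C_f/\beta$. But as a proof of the lemma as stated, and as used, it has two genuine gaps. The first is the time-dependent source. Your fix — integrate by parts in $s$ to trade $\Delta e^{(t-s)\beta\Delta}$ for $\partial_s f$ — needs a uniform bound on $\partial_s f$, which the lemma does not assume ($\lvert f\rvert\le C_f$ only), and which the application cannot supply in the way you claim. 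The source actually fed into this lemma (via Lemma~\ref{lem:22} inside Theorem~\ref{thm:vinfty}) is not $\chi_\Theta(\mathbf{x})U(t)$; that term is already absorbed into the $\delta v^\star U$ part of the decomposition. What remains is Eq.~(\ref{eq:fexo}), $f=\frac{\delta U}{\lvert\Omega\rvert}-\eta(1-u)\delta U v^\star-\delta U' v^\star$, whose time derivative involves $u_{,t}$ (hence $\Delta u$ and the not-yet-bounded $v$) and $U''$ — bounding these at this stage is circular. This is not a removable technicality: for merely bounded $f$, boundedness of $\Delta v$ is exactly the question of maximal regularity of the heat equation in $L^\infty$, which fails in general; so some extra hypothesis on $f$ is unavoidable, and your instinct that structure is needed is sound, but the structure you invoke is not the one present.

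The second gap is the dissipative term. The telescoping identity is specific to the constant-coefficient semigroup. For the propagator $P(t,s)$ of $\partial_t-\beta\Delta+a$ with $a=a(t,\mathbf{x})$, one has $\beta\Delta P(t,s)=\partial_t P(t,s)+a(t,\cdot)P(t,s)$, because $\Delta$ does not commute with multiplication by $a$; inserting this into the Duhamel formula merely reproduces the PDE, $\beta\Delta v=v_{,t}+av-f$, and yields nothing unless $v_{,t}$ and $av$ are separately bounded — which is precisely what is unknown (the lemma does not even assume $a$ bounded). The $L^\infty$-contraction of the perturbed propagator that you invoke controls $v$, not $\Delta v$, so the assertion that the term $-av$ ``cannot enlarge the bound'' is unsupported exactly where the lemma has content; likewise, treating $-av+f$ as a new bounded source is circular. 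As written, your argument proves the statement only for $a\equiv 0$ and a source that is time-independent (or uniformly Lipschitz in time after removing its spatial mean), not Lemma~\ref{lemmaA} as stated or as it is applied to Eq.~(\ref{eq:dtsimp}).
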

 
 \begin{remark}
 Eq.~(\ref{eq:lemmaA}) is a heat conduction equation with the additional leveling term $-a(t , \mathbf{x}) v$, homogeneous Neumann boundary conditions, i.e. no heat flux over the boundary, and a  bounded heat source~$f$. 
 Hence, a physical point of view implies immediately a bounded curvature of $v$.
 \end{remark}

The mathematical argumentation starts with the Green's function $G=G(t,\mathbf{x}, \tau, \mathbf{y})$ of Eq.~(\ref{eq:lemmaA}), which is dominated by the singularity of the standard heat equation. 
Due to the Neumann boundary condition, there are no additional source terms at the boundary. 
The Laplacian $\Delta v(\mathbf{x})$ is the convolution of $\Delta_{\mathbf{x}} G$ with the bounded function $f$. 
This convolution can be estimated by a sum of spatial integrals over small domains and afterwards by time integration leading to terms in the Gauss' error function. 
Due to its technical effort, we omit the argumentation of the physically proven assertion of Lemma~\ref{lemmaA}. 
By the way, another possible argumentation uses a discretization of the Eq.~(\ref{eq:lemmaA}), where the eigenvalues and eigenvectors of the discretized differential operator $-(\Delta+a)$ can be estimated in a similarly technical argumentation. 
Then the limit case of a temporal step size tending to zero provides the assertion of Lemma~\ref{lemmaA} for every spatial discretization, and since $f$ is bounded also the limit situation of a vanishing grid size.

\begin{lemma}\label{lem:alt29}
Let $z: \Omega \rightarrow \mathbb{R}$ be a sufficiently smooth function with homogeneous Neumann boundary conditions.
The function $z$ fulfills $\lvert \Delta z (\mathbf{x}) \rvert \leq C_1$ for all $\mathbf{x} \in \Omega$ and $\lVert z \rVert_{L^1(\Omega)} \leq C_2$.
Then its values $z(\mathbf{x})$ are bounded by some $z_\mathrm{max} < \infty$ with $\lvert z (\mathbf{x}) \rvert \leq z_\mathrm{max}$ for all $\mathbf{x} \in \Omega$.
\end{lemma}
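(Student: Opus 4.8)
The plan is to recover a full second-order bound on $z$ from the mere trace bound $|\Delta z|\le C_1$ by invoking $L^p$-elliptic regularity, and then to pass to a pointwise bound through a Sobolev embedding, using the $L^1(\Omega)$-control only to pin down the additive constant left free by the Neumann condition. First I would record that, since $|\Delta z(\mathbf{x})|\le C_1$ pointwise and $\Omega$ is bounded, the datum $g:=\Delta z$ lies in every $L^p(\Omega)$ with $\|g\|_{L^p(\Omega)}\le C_1|\Omega|^{1/p}$. Writing $\bar z=\frac{1}{|\Omega|}\int_\Omega z\,\mathrm{d}\mathbf{x}$ for the mean, the function $z-\bar z$ has vanishing mean, satisfies the same homogeneous Neumann condition, and solves $\Delta(z-\bar z)=g$; this is exactly the setting in which the Neumann problem is uniquely solvable up to constants and admits the a priori estimate $\|z-\bar z\|_{W^{2,p}(\Omega)}\le C_p\,\|g\|_{L^p(\Omega)}$.

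Second I would fix the integrability exponent. Because $d\in\{2,3\}$, any $p$ with $p>d/2$ — for instance $p=2$ — gives $2-d/p>0$, so the Sobolev embedding $W^{2,p}(\Omega)\hookrightarrow C^0(\overline{\Omega})$ holds and yields $\|z-\bar z\|_{L^\infty(\Omega)}\le C_{\mathrm{emb}}\,\|z-\bar z\|_{W^{2,p}(\Omega)}\le C_{\mathrm{emb}}C_pC_1|\Omega|^{1/p}$. It remains to control $\bar z$, and this is where the hypothesis $\|z\|_{L^1(\Omega)}\le C_2$ enters: $|\bar z|\le\frac{1}{|\Omega|}\|z\|_{L^1(\Omega)}\le C_2/|\Omega|$. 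Combining the two contributions through $\|z\|_{L^\infty}\le\|z-\bar z\|_{L^\infty}+|\bar z|$ gives the claim with
\[
z_{\max}=C_{\mathrm{emb}}C_pC_1|\Omega|^{1/p}+\frac{C_2}{|\Omega|}<\infty,
\]
a constant depending only on $C_1$, $C_2$, $\Omega$ and the fixed exponent $p$.

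The step I expect to be the genuine obstacle — and the reason a purely elementary argument does not suffice — is precisely the regularity estimate $\|z-\bar z\|_{W^{2,p}}\le C_p\|g\|_{L^p}$. A pointwise bound on $\Delta z$ controls only the \emph{trace} of the Hessian of $z$, not the individual second derivatives, so $z$ could a priori drop arbitrarily steeply in one direction, compensated by positive curvature in others, while keeping $\Delta z$ bounded. Consequently a naive Taylor or barrier estimate around the maximiser, or a comparison with $\pm\frac{C_1}{2d}|\mathbf{x}-\mathbf{x}_0|^2$ via sub-/superharmonicity, fails to produce a definite neighbourhood on which $z$ stays close to its maximum, and the $L^1$-bound cannot be played off against it directly. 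The Calder\'on--Zygmund theory is exactly what turns the trace bound in $L^p$ into a bound on all second derivatives in $L^p$, after which the embedding does the rest. Two technical provisos I would state explicitly: the boundary must be regular enough (say $C^{1,1}$, or at least Lipschitz) for the global Neumann $W^{2,p}$-estimate and the embedding up to the boundary to apply; and no hand differentiation of $z$ at its maximiser is needed here, since the regularity estimate is applied to the given smooth $z$ with $g=\Delta z$ treated as a known right-hand side.
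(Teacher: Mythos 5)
Your proof is correct, and it shares the same overall skeleton as the paper's: split $z$ into an oscillation part plus an additive constant, bound the oscillation using only $\lvert \Delta z \rvert \le C_1$, and then use the $L^1(\Omega)$-bound solely to pin down the constant that the Neumann problem leaves free. Where you differ is in the tool used for the oscillation bound. The paper writes $z(\mathbf{x}) = \int_\Omega G(\mathbf{x},\mathbf{y})\varrho(\mathbf{y})\,\mathrm{d}\mathbf{y} + C_3$ with a non-negative Neumann Green's function and estimates $\lvert z - C_3\rvert$ pointwise by the potential $\tilde{\Phi}$ of a constant source, then argues softly that the $L^1$-constraint caps the admissible $C_3$; you instead subtract the mean $\bar z$, invoke the Calder\'on--Zygmund estimate $\lVert z - \bar z\rVert_{W^{2,p}} \le C_p \lVert \Delta z\rVert_{L^p}$ for the Neumann problem (compatibility $\int_\Omega \Delta z = 0$ being automatic from the boundary condition), and conclude via the embedding $W^{2,p}(\Omega)\hookrightarrow C^0(\overline{\Omega})$ for $p > d/2$, which is available since $d\in\{2,3\}$. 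Your route buys a fully quantitative constant $z_{\max} = C_{\mathrm{emb}}C_pC_1\lvert\Omega\rvert^{1/p} + C_2/\lvert\Omega\rvert$ and an explicit, checkable list of hypotheses (notably the boundary regularity, say $C^{1,1}$, needed for the global $W^{2,p}$ estimate), both of which the paper leaves implicit; your direct computation $\lvert \bar z\rvert \le C_2/\lvert\Omega\rvert$ is also cleaner than the paper's indirect "$C_3 \ge C_{3,\max}$ violates the $L^1$ bound" argument. What the paper's Green's-function approach buys in exchange is independence from $W^{2,p}$ theory and Sobolev embeddings: it produces the pointwise bound directly from the representation formula, at the cost of relying on existence, non-negativity and boundedness properties of the Neumann Green's function that themselves presuppose comparable domain regularity. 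One could quibble that controlling only the trace $\Delta z$ does not obviously control the potential-theoretic argument either, but both proofs stand on standard elliptic theory; neither has a gap the other avoids.
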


\begin{proof}
Such a function $z$ solves a boundary value problem 
\begin{align}
	\begin{aligned}
		- \Delta z (\mathbf{x}) & = \varrho (\mathbf{x}) && \text{for } \mathbf{x} \in \Omega, \\
		\nabla z  \cdot \mathbf{n} & = 0 && \text{for } \mathbf{x} \in \partial \Omega
	\end{aligned} 
\end{align}
with a source term with $\lvert \varrho (\mathbf{x}) \rvert \leq C_1$ for all $\mathbf{x} \in \Omega$.
We choose a Green's function $G(\mathbf{x}, \mathbf{y})$ with $G( \mathbf{x}, \mathbf{y}) \geq 0$ for all $\mathbf{x}, \mathbf{y} \in \Omega$.

With an additive constant $C_3$, we have
\begin{align*}
	z(\mathbf{x}) = \int_\Omega G( \mathbf{x}, \mathbf{y}) \varrho (\mathbf{y}) \, \mathrm{d} \mathbf{y} + C_3 \ \text{ and } \ \lvert z (\mathbf{x}) - C_3 \rvert \leq C_1 \int_\Omega G( \mathbf{x}, \mathbf{y})  \, \mathrm{d} \mathbf{y} = \tilde{\Phi} (\mathbf{x})
\end{align*}
with the smooth and bounded potential $\tilde{\Phi}$ for a constant source term.

There is a value $C_{3,\mathrm{max}} < \infty$ so that the condition $\lVert z \rVert_{L^1(\Omega)} \leq C_2$ is not fulfilled for any $C_3\geq C_{3,\mathrm{max}}$.
Consequently, we get 
\begin{align*}
\lvert z(\mathbf{x}) \rvert \leq C_{3,\mathrm{max}} +\max_{\mathbf{x} \in \Omega} \tilde{\Phi} (\mathbf{x}) = z_\mathrm{max} < \infty.
\end{align*}
\end{proof}

This lemma shows the boundedness of a stationary problem which displays in parts the inflow of T~cells in a certain region. 
In the next step, we use this result for showing the existence of an upper bound for $v$ in a time-dependent setting which still abstracts from the coupled reaction diffusion system in (\ref{eq:sys}). 

 \begin{lemma}\label{lem:22}
 Let $v= v(t, \mathbf{x})$ be the solution of
 \begin{align}\label{eq:lemmaC}
 	\begin{aligned}
		v_{,t} &= \beta \Delta v - a ( t, \mathbf{x}) v + f(t, \mathbf{x}) && \text{for } \mathbf{x} \in \Omega, t >0,\\
		\nabla v \cdot \mathbf{n} & = 0 &&  \text{for } \mathbf{x} \in \partial \Omega, t >0,\\
		v(0, \mathbf{x}) & = v_0(\mathbf{x}) && \text{for } \mathbf{x} \in \Omega
	\end{aligned}
\end{align} 
with $a(t,\mathbf{x}) \geq 0$ and $\lvert f(t, \mathbf{x}) \rvert \leq C_f$ for all $\mathbf{x} \in \Omega , t >0$ and bounded initial conditions $v_0(\mathbf{x})$. 
Furthermore, it shall be known that $\lVert v \rVert_{L^1(\Omega)} \leq C_2$ is bounded.

Then, there is a bounded $v_\mathrm{max}$ with $\lvert v (t, \mathbf{x}) \rvert \leq v_\mathrm{max}$ for all $\mathbf{x} \in \Omega$ and all $t>0$.
 \end{lemma}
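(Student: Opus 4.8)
The plan is to combine the curvature bound of Lemma~\ref{lemmaA} with the pointwise bound of Lemma~\ref{lem:alt29}, after first stripping off the influence of the initial data. Since Lemma~\ref{lemmaA} is stated only for vanishing initial values, the first step is to split $v = w + \tilde v$, where $w = w(t,\mathbf{x})$ solves the homogeneous problem
\begin{align*}
 w_{,t} = \beta \Delta w - a(t,\mathbf{x})\, w, \quad \nabla w \cdot \mathbf{n} = 0, \quad w(0,\mathbf{x}) = v_0(\mathbf{x}),
\end{align*}
carrying the (possibly irregular) initial data, and $\tilde v = v - w$ solves the same equation as $v$, with the source $f$ but with vanishing initial data $\tilde v(0,\cdot)=0$. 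Because $a \geq 0$, a maximum-principle argument exactly as in the proofs of Lemma~\ref{lem:posi} and Lemma~\ref{lem:boundu} shows that $w$ cannot grow above its initial amplitude, so $\lvert w(t,\mathbf{x}) \rvert \leq \lVert v_0 \rVert_{L^\infty(\Omega)}$ for all $t>0$; the damping term $-a w$ only helps.

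Next I would apply Lemma~\ref{lemmaA} to $\tilde v$. It has vanishing initial data, homogeneous Neumann boundary conditions, a nonnegative coefficient $a$ and the bounded source $f$ with $\lvert f \rvert \leq C_f$, so the hypotheses are met and $\lvert \Delta \tilde v(t,\mathbf{x}) \rvert \leq C_c$ holds with a constant $C_c$ that is uniform in both $\mathbf{x}$ and $t$. The $L^1(\Omega)$ control transfers as well: from $\lVert v \rVert_{L^1(\Omega)} \leq C_2$ and the bound on $w$ we get $\lVert \tilde v(t,\cdot) \rVert_{L^1(\Omega)} \leq C_2 + \lvert \Omega \rvert \, \lVert v_0 \rVert_{L^\infty(\Omega)} =: \tilde C_2$, again uniformly in $t$.

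Finally I would invoke Lemma~\ref{lem:alt29} time slice by time slice. Fixing $t$ and setting $z = \tilde v(t,\cdot)$, the function $z$ is smooth, satisfies the homogeneous Neumann condition, has $\lvert \Delta z \rvert \leq C_c$ and $\lVert z \rVert_{L^1(\Omega)} \leq \tilde C_2$; equivalently it solves $-\Delta z = -\Delta \tilde v(t,\cdot)$ with a source bounded by $C_c$ whose mean vanishes by the divergence theorem and the Neumann condition. Lemma~\ref{lem:alt29} then yields $\lvert \tilde v(t,\mathbf{x}) \rvert \leq z_\mathrm{max}$, and since the constant $z_\mathrm{max}$ produced there depends only on $C_c$, $\tilde C_2$ and $\Omega$, all independent of $t$, the bound is uniform in time. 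Combining the two pieces gives $\lvert v(t,\mathbf{x}) \rvert \leq z_\mathrm{max} + \lVert v_0 \rVert_{L^\infty(\Omega)} =: v_\mathrm{max}$ for all $\mathbf{x} \in \Omega$ and all $t>0$.

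The main obstacle, and the point deserving the most care, is the uniformity in $t$: the whole argument works only because both the curvature bound from Lemma~\ref{lemmaA} and the transferred $L^1(\Omega)$ bound are independent of time, so that feeding them into the purely spatial Lemma~\ref{lem:alt29} at each instant produces one and the same constant $z_\mathrm{max}$. The reconciliation of the time-dependent evolution equation with the stationary Lemma~\ref{lem:alt29} is achieved by freezing $t$ and reading $-\Delta \tilde v(t,\cdot)$ as an instantaneous, uniformly bounded source term, while the reduction to vanishing initial data via the maximum principle is routine given $a \geq 0$.
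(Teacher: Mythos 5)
Your proof is correct and follows essentially the same route as the paper: the same decomposition into a homogeneous part carrying the initial data (bounded by the maximum principle since $a \geq 0$) and a zero-initial-data part with source $f$, to which Lemma~\ref{lemmaA} and then Lemma~\ref{lem:alt29} are applied. In fact you are slightly more careful than the paper, since you explicitly verify that the $L^1(\Omega)$ bound transfers to the particular part and that the constants feeding into Lemma~\ref{lem:alt29} are uniform in $t$, two points the paper's proof leaves implicit.
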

 
 At every maximum point of $v(t, \cdot)$, we find $v_{,t} \leq f \leq C_f$, and the maximum $\displaystyle \max_{\mathbf{x} \in \Omega} v(t, \cdot)$ grows at most linearly.
 So the following proof excludes an infinite growth of $f$ for $t \to \infty$.
 
 \begin{proof}
 System~(\ref{eq:lemmaC}) is a linear differential equation and the solution $v$ decomposes into $v= v_\mathrm{hom} + v_\mathrm{part}$.
  The function $v_\mathrm{hom}$ obeys the homogeneous equation with $f \equiv 0$ and fulfills the initial conditions $v_0$.
  The function $v_\mathrm{part}$ solves the system~(\ref{eq:lemmaA}) from Lemma~\ref{lemmaA}.
  
  The function $v_\mathrm{hom}$ follows the maximum principle
  \begin{align*}
  	\max_{\mathbf{x} \in \Omega, t \geq 0} \lvert v_\mathrm{hom}(t, \mathbf{x}) \rvert = \max_{\mathbf{x} \in \Omega} \lvert v_0(\mathbf{x}) \rvert
\end{align*}
and stays bounded. 

Lemma~\ref{lemmaA} says that $v_\mathrm{part} (t, \cdot)$ has a bounded Laplacian $\lvert \Delta v_\mathrm{part} (t, \mathbf{x}) \vert \leq C_1$ for all $\mathbf{x} \in \Omega$.
 Lemma~\ref{lem:alt29} assures that $v_\mathrm{part}$ is bounded for all times by a $z_\mathrm{max} \in \mathbb{R}$.
 Together with the boundedness of $v_\mathrm{hom}$, we find
 \begin{align*}
 	\lvert v(t, \mathbf{x}) \rvert \leq \max_{\mathbf{x} \in \Omega} \lvert v_0(\mathbf{x}) \rvert +z_\mathrm{max} .
 \end{align*}
 \end{proof}
 
 Following, we adapt this result for the coupled reaction diffusion system modeling the dynamics of a liver infection. 
 
 \begin{theorem}\label{thm:vinfty}
 The solution $v$ of Eq.~(\ref{eq:sys}) is bounded by a finite value $v_\mathrm{max}$.
 \end{theorem}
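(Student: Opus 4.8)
The plan is to recognize the equation for $v$ in system~(\ref{eq:sys}) as an instance of the linear template treated in Lemma~\ref{lem:22} and to verify its hypotheses one by one from the a-priori bounds already established. Concretely, I would rewrite the second line of Eq.~(\ref{eq:sys}) as
\begin{align*}
v_{,t} = \beta \Delta v - a(t,\mathbf{x})\, v + f(t, \mathbf{x}), \quad a(t,\mathbf{x}) = \eta\bigl(1 - u(t,\mathbf{x})\bigr), \quad f(t,\mathbf{x}) = j[u](\mathbf{x}),
\end{align*}
treating the already-constructed weak-solution component $u$ as a fixed coefficient function. This decoupling is the conceptual heart of the argument: the nonlinear, non-locally coupled system collapses into a single linear heat-type equation for $v$ once $u$ is frozen, and the only things that matter about $u$ are that it furnishes a non-negative leveling coefficient and a bounded source.

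Next I would check the four hypotheses of Lemma~\ref{lem:22}. The sign condition $a(t,\mathbf{x}) = \eta(1-u) \geq 0$ follows from the pointwise bound $u \leq 1$ of Lemma~\ref{lem:boundu}. The source is bounded because $f = j[u] = \delta \chi_\Theta(\mathbf{x}) U(t)$ by Eq.~(\ref{eq:inflow}), and with $\chi_\Theta$ bounded (say by $\chi_\mathrm{max}$) and $U(t) \leq \lvert \Omega \rvert$ from Cor.~\ref{cor:Ubeschr} one obtains $\lvert f(t,\mathbf{x}) \rvert \leq \delta \chi_\mathrm{max} \lvert \Omega \rvert =: C_f$ uniformly in time. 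Bounded initial data $v_0$ is part of the standing assumptions. Finally, the required $L^1(\Omega)$ control $\lVert v(t,\cdot) \rVert_{L^1(\Omega)} = V(t) \leq V_\mathrm{up}$ holds for all $t$ by Theorem~\ref{thm:sigma} and Remark~\ref{rem:L1v}. With all four hypotheses in place, Lemma~\ref{lem:22} then delivers a finite $v_\mathrm{max}$ with $v(t,\mathbf{x}) \leq v_\mathrm{max}$ for all $\mathbf{x} \in \Omega$ and all $t > 0$.

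The reason this final argument is short is that the genuinely hard work has been front-loaded into the preceding results: the non-local term $j[u]$, which is precisely what obstructs the standard existence theory, is tamed the instant one knows the global mass bound $U \leq \lvert \Omega \rvert$, turning it into an ordinary bounded forcing; and the potential for unbounded accumulation of $v$ is absorbed by the chain Lemma~\ref{lemmaA} (bounded Laplacian of the particular part) followed by Lemma~\ref{lem:alt29} (pointwise bound from bounded Laplacian together with $L^1$ control). I expect the only real subtlety to be the global-in-time character of the conclusion: one must ensure that the bounds on $a$, on $f$, and on $\lVert v \rVert_{L^1(\Omega)}$ hold not merely on the local existence interval $[0,T)$ but for all $t > 0$, which is exactly why the time-independence of the trapezoid $\Sigma$ and of the bound $u \leq 1$ must be invoked rather than their interval versions. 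Combined with the local existence theorem, the uniform $L^\infty(\Omega)$ bound thereby obtained excludes blow-up and yields the global existence of a bounded solution.
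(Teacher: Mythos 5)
Your proposal is a correct and complete application of Lemma~\ref{lem:22} as it is stated: all four hypotheses are verified from the right sources ($a=\eta(1-u)\ge 0$ from Lemma~\ref{lem:boundu}, $\lvert f\rvert=\lvert j[u]\rvert\le \delta\chi_\mathrm{max}\lvert\Omega\rvert$ from Eq.~(\ref{eq:inflow}) and Cor.~\ref{cor:Ubeschr}, bounded $v_0$ from Theorem~\ref{ex_loc}, and $V(t)\le V_\mathrm{up}$ for all $t$ from Theorem~\ref{thm:sigma} and the remark following it), and your emphasis on the global-in-time character of these bounds is exactly the right subtlety. However, the paper takes a genuinely different route. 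Instead of feeding $v$ itself into Lemma~\ref{lem:22} with the source $f=j[u]=\delta\chi_\Theta(\mathbf{x})U(t)$, the paper first decomposes $v(t,\mathbf{x})=\delta v^\star(\mathbf{x})U(t)+\tilde{v}(t,\mathbf{x})$, where $v^\star$ solves the stationary Neumann problem~(\ref{eq:probstat}), and applies the lemma to the remainder $\tilde{v}$, whose equation~(\ref{eq:dtsimp}) carries the source $f=\delta U/\lvert\Omega\rvert-\eta(1-u)\delta U v^\star-\delta U^\prime v^\star$ of Eq.~(\ref{eq:fexo}). The point of this extra step is to protect the weakest link in the chain, Lemma~\ref{lemmaA}: the assertion that a bounded source forces a pointwise bounded Laplacian is only argued heuristically in the paper, and it is most delicate precisely for sources that are rough in space --- which $j[u]$ is, since $\chi_\Theta$ is merely piecewise continuous with support $\Theta$. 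The decomposition cancels $\chi_\Theta$ out of the parabolic problem entirely, absorbing it into the elliptic equation $-\beta\Delta v^\star=\chi_\Theta-1/\lvert\Omega\rvert$, where boundedness of $\Delta v^\star$ is trivial (it equals the bounded right-hand side times $-1/\beta$), and leaves a spatially continuous forcing built from $u$, $v^\star$, $U$ and $U^\prime$, for which the regularity claim of Lemma~\ref{lemmaA} is far safer. The price the paper pays is the additional estimate $\lvert U^\prime\rvert\le \nu U+\gamma V$ and the normalization $\int_\Omega v^\star\,\mathrm{d}\mathbf{x}=0$ of Remark~\ref{rem:starconst} needed for the $L^1$-control of $\tilde{v}$ in Eq.~(\ref{eq:l1tilde}); your route avoids both. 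In short, your argument is shorter and formally valid given the lemmas at face value, while the paper's decomposition buys robustness at exactly the point where its own auxiliary Lemma~\ref{lemmaA} is least rigorous.
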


\begin{proof}
We decompose $v= v(t, \mathbf{x})$ into 
 \begin{align*}
v(t,\mathbf{x}) = \delta v^\star(\mathbf{x}) U(t) +\tilde{v}(t,\mathbf{x})
\end{align*}
and the evolution equation for $v$ in Eq.~(\ref{eq:sys}) transforms into
\begin{align}\label{eq:dtzerleg}
\begin{aligned}
v_{,t} &= \delta U^\prime(t) v^\star(\mathbf{x}) + \tilde{v}_{,t} = j[u] - \eta (1-u) \left (\delta U v^\star + \tilde{v} \right ) + \delta U \beta \Delta v^\star + \beta \Delta \tilde{v}.
\end{aligned}
\end{align}
Due to Eq.~(\ref{eq:inflow}) and the stationary solution $v^\star$ of Eq.~(\ref{eq:probstat}), Eq.~(\ref{eq:dtzerleg}) simplifies to
\begin{align}\label{eq:dtsimp}
\tilde{v}_{,t} = \beta \Delta \tilde{v} - \eta (1-u) \tilde{v} + f , 
\end{align}
where the exogenous influence
\begin{align}\label{eq:fexo}
f= \frac{\delta U}{\lvert \Omega \rvert} - \eta (1-u) \delta U v^\star - \delta U^\prime v^\star
\end{align}
for the standard diffusion problem in Eq.~(\ref{eq:dtsimp}) is a function in $t$ and $\mathbf{x}$ for fixed $u$ and thus fixed $U$ and $U^\prime$ are as well as $v^\star$. 
Eq.~(\ref{eq:dtsimp}) is completed with homogeneous Neumann boundary conditions for $\tilde{v}$.

That means, we regard $u$ to be given and investigate Eq.~(\ref{eq:dtsimp}) as a diffusion problem for $\tilde{v}= \tilde{v}(t, \mathbf{x})$ with the exogenous influence $f= f(t, \mathbf{x})$.
Since
\begin{align*}
U^\prime (t) = \int_\Omega u w(u) \, \mathrm{d} \mathbf{x} - \gamma \int_\Omega uv \, \mathrm{d} \mathbf{x}
\end{align*}
and $\lvert w(u) \rvert \leq \nu $ with $\nu \in \mathbb{R}_+$, cf. Remark~\ref{rem:zuw}, we see
\begin{align*}
\lvert U^\prime (t) \rvert \leq \nu U(t) + \gamma V(t)
\end{align*}
for all $t$. 
Together with the boundedness of $U$ and $V$ for all $t$, the exogenous influence $f$ in Eq.~(\ref{eq:fexo}) is bounded by some constant $C_\mathrm{f} \geq \lvert f(t, \mathbf{x}) \rvert$ for all $t > 0 $ and $\mathbf{x} \in \Omega$. 

Now with Eq.~(\ref{eq:dtsimp}), we find 
\begin{align}\label{eq:l1tilde}
V_\mathrm{up} \geq \int_\Omega v(t,\mathbf{x}) \, \mathrm{d} \mathbf{x} = \delta U(t) \int_\Omega v^\star (\mathbf{x}) \, \mathrm{d} \mathbf{x} + \int_\Omega \tilde{v}(t, \mathbf{x}) \, \mathrm{d} \mathbf{x} = \int_\Omega \tilde{v} (t, \mathbf{x}) \, \mathrm{d} \mathbf{x}.
\end{align}

Now, Eq.~(\ref{eq:dtsimp}) fulfills all conditions of Lemma~\ref{lem:22}, namely $a(t,\mathbf{x}) = \eta (1-u) \geq 0$ and $\lvert f(t ,\mathbf{x}) \rvert \leq C_f$ for a heat equation with homogeneous Neumann boundary conditions. 

Consequently, there exists a maximal value $\tilde{v}_\mathrm{max} \geq \tilde{v} (t, \mathbf{x})$ for all $t>0$ and all $\mathbf{x} \in \Omega$, and we get 
\begin{align*}
v(t,\mathbf{x}) \leq \delta \lvert \Omega \rvert \max v^\star(\mathbf{x}) + \tilde{v}_\mathrm{max} = v_\mathrm{max} \in \mathbb{R}.
\end{align*}
 
\end{proof}

\begin{remark}\label{rem:vlinfty}
The solution $v$ of Eq.~(\ref{eq:sys}) is bounded by a finite value $v_\mathrm{max}$. 
Finally, the solution $v(\cdot, \mathbf{x})$ is a $L^\infty(\Omega)$-function. 
\end{remark}

 In this section, we have proven the boundedness of the solution of Eq.~(\ref{eq:sys}). 
 While the boundedness of $u$ was a result of the used growth function and therefore allows interpretation as a concentration, the boundedness of $v$ was not obvious. 
Using the oppositely acting mechanisms in the reaction functions and the boundedness of $u$, we first showed the boundedness of $v$ in $L^1(\Omega)$. 

We provided a bounded estimate for the $L^2(\Omega)$-norm of $v$ by using the mean-value theorem of integration and the boundedness of $\lVert v (\cdot, \mathbf{x}) \rVert_{L^1(\Omega)}$. 

For proving the boundedness of $v$ in $L^\infty(\Omega)$, we separated $v(t, \mathbf{x})= \delta v^\star (\mathbf{x}) U(t) + \tilde{v}(t, \mathbf{x})$ into different functions. 
One component, $v^\star$, of the functions was the solution of a stationary problem covering the space-dependent function modeling the inflow area of the liver structure. 
By showing the boundedness of all components of $v$, we proved in Theorem~\ref{thm:vinfty} that $v$ has a finite maximal value.

Applied to the modeling of liver infections, the result of Rem.~\ref{rem:vlinfty} says that the amount of T~cells is bounded by a finite value. 
The T~cells attack the virus by triggering the programmed cell death. 
This leads to inflammation in the liver tissue and can cause secondary diseases like cancer. 
Besides, a too high amount of T~cells might cause a sepsis. 
Rem.~\ref{rem:vlinfty} gives the fact, that the immune reaction is bounded but does not give a finite value. 
Therefore, Rem.~\ref{rem:vlinfty} justifies the use of model in the sense, that the immune reaction remains bounded for all time.
It is a first step towards quantitative and finer estimates for $v_\mathrm{max}$ which contain information about the occurrence of sepsis. 

Therefore, the next section provides numerical evaluations on the $L^1(\Omega)$ and $L^2(\Omega)$ estimates. 

\section{Numerical evaluation of the estimates}\label{sec:num}

Oftentimes, estimates used in analytical results are rather rough. 
In this section, we show numerical simulations of the estimates and the exact value. 

First, we evaluate the estimation of the domain $\Sigma$ as maximal $L^1(\Omega)$-norms. 
In Fig.~\ref{fig:esti_sigma}, the trajectories of the two solutions from Fig.~\ref{fig:healing} and~\ref{fig:chronic} provide the total amount $U$ and $V$.
They are compared to the estimated $\Sigma$ following Theorem~\ref{thm:sigma}. 

\begin{figure}[bhtp]
\begin{center}
\includegraphics[width=\textwidth]{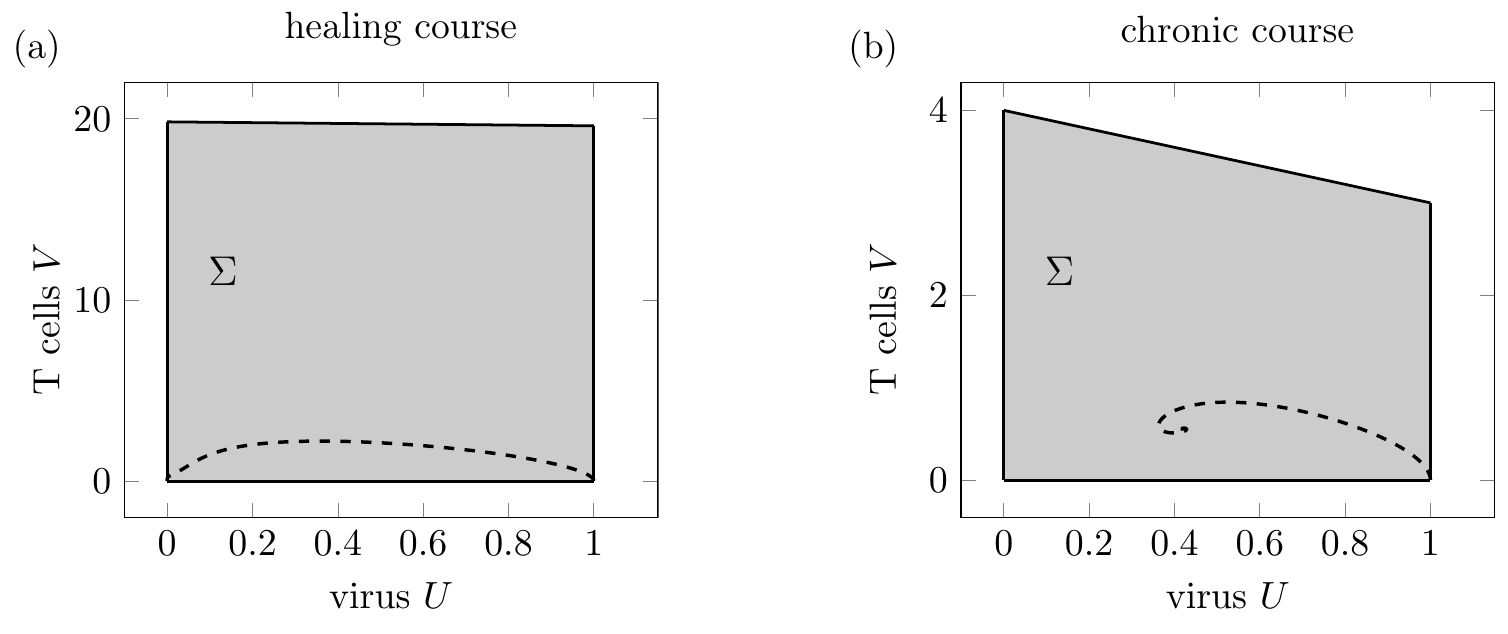}
\caption{Comparison of the trajectories of different solutions in phase space $(U, V)$ and the trapezoid~$\Sigma$.
(a) Healing course, see Fig.~\ref{fig:healing} for the parameters. The upper value $V_\mathrm{up}$ is given by $V_\mathrm{up}= 19.833$.
(b)~Chronic course, see Fig.~\ref{fig:chronic} for the parameters. The upper value $V_\mathrm{up}$ is given by $V_\mathrm{up}= 4$.}
\label{fig:esti_sigma}
\end{center}
\end{figure}

Fig.~\ref{fig:esti_sigma} shows, that the upper bound of $\eta U + \gamma V \leq \gamma V_\mathrm{up}$ is a rather rough estimate for the $L^1(\Omega)$-norms of the solutions. 
In the numerical simulations in Fig.~\ref{fig:healing} and Fig.~\ref{fig:chronic}, which are as well used in Fig.~\ref{fig:esti_sigma}, the initial conditions are $u(0,\mathbf{x})\equiv 1$ and $v(0,\mathbf{x}) \equiv 0$.
A solution with larger initial conditions $V(0)$  would reach closer to the upper bound given by $\Phi=  \gamma V_\mathrm{up}$. 
As shown in Theorem~\ref{thm:sigma}, the $L^1(\Omega)$-norms of every solution with $(U(0), V(0)) \in \Sigma$ stay in $\Sigma$. 

Theorem~\ref{thm:vL2} gives in Eq.~(\ref{eq:estimate_psi2}) an estimate for the $L^2(\Omega)$-norm of $v$. 
By using  $\xi (t) $ as solution of Eq.~(\ref{eq:meanvalue}) and directly Eq.~(\ref{eq:28}) we get the approximation
\begin{align*}
\Psi(t) = \frac{1}{2} \lVert v(t,\mathbf{x}) \rVert_{L^2(\Omega)}^2 \leq  \mathrm{e}^{-2 \eta \int_0^t   \xi (s) \, \mathrm{d} s} \left ( M \int_0^t \mathrm{e}^{2 \eta \int_0^\tau \xi (s) \, \mathrm{d} s} \, \mathrm{d} \tau + \Psi(0) \right) 
\end{align*}
with a parameter-dependent constant $M= \delta \lvert \Omega \rvert \chi_\mathrm{max} V_\mathrm{up}$.
The inequality $0 < \xi(t) \leq 1$ yields according to {Lemma~\ref{lem:integral}}. %

In the cases of the two regarded simulations in Fig.~\ref{fig:healing} and Fig.~\ref{fig:chronic} of Eq.~(\ref{eq:sys}), the initial value $\Psi(0)$ is zero, because $v(0,\mathbf{x})=0$ for all $\mathbf{x} \in \Omega$. 
Therefore, we compare the $\Psi(t)= \frac{1}{2} \lVert v(t,\mathbf{x}) \rVert_{L^2(\Omega)}^2$ with the functional 
\begin{align*}
E(\xi(t)) = \mathrm{e}^{-2 \eta \int_0^t   \xi (s) \, \mathrm{d} s}  \delta \lvert \Omega \rvert V_\mathrm{up} \int_0^t \mathrm{e}^{2 \eta \int_0^\tau \xi (s) \, \mathrm{d} s} \, \mathrm{d} \tau .
\end{align*}

\begin{figure}[thbp]
\begin{center}
\includegraphics[width=\textwidth]{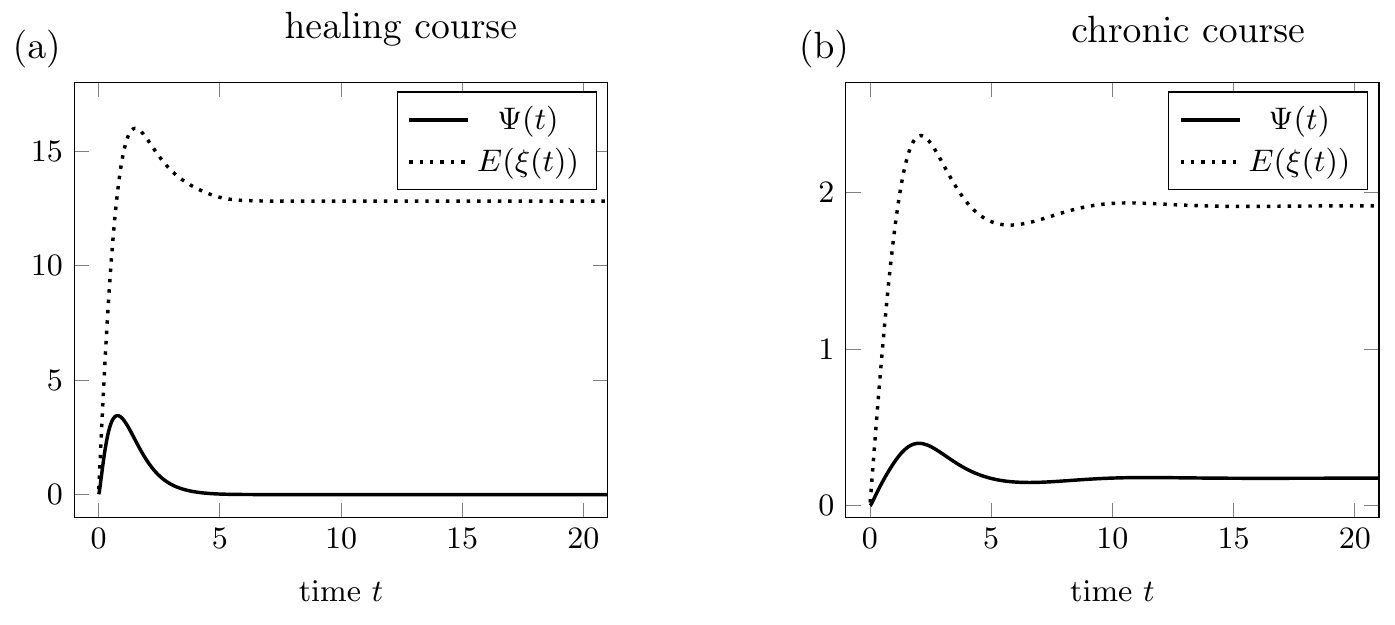}
\caption{Comparison $\Psi(t)$ and $E(\xi(t))$. 
(a) Healing course, see Fig.~\ref{fig:healing} for the parameters.
(b) Chronic course, see Fig.~\ref{fig:chronic} for the parameters.
In both cases, $E(\xi(t))$ overestimates the functional $\Psi(t)$.}
\label{fig:esti_l2}
\end{center}
\end{figure}

In both cases in Fig.~\ref{fig:esti_l2}, the estimation $E$ is rather large compared to the functional $\Psi(t)$.
Nevertheless, the estimate is a good approximation of scale of the maximal value of $\Psi$.

The numerical simulations show that the used estimates are rather loose even if they were sufficient for gaining the analytical existence results. 
The estimations exceed the values of the numerical solutions. 
In context of liver infections, the estimations can be regarded as worst case scenario. 
A medical treatment in the light of the worst case scenario might lead to a longer lasting infection course but decrease the risk of a sepsis.

\section{Conclusions}

With the aim to modeling the dynamics of liver infections as an interplay between virus and T~cells, a reaction diffusion system was presented in \cite{kerl_reaction_2012}.
A non-local term in the reaction function describes the inflow of T~cells depending on the total virus amount in the domain. 
The model abstracts from the cell scale with many unknown mechanisms to a mesoscopic length scale. 
On this scale, the mathematical description contains a space-dependent term which leads to a new problem concerning the analysis of reaction diffusion equations. 
Additionally, the reaction terms contain oppositely acting mechanisms resulting in a feedback loop for the increase of T cells.

The aim of this paper was to prove the existence of bounded solutions for all time and to provide a proof with interpretable intermediate steps. 
Therefore, we started with a local existence theorem and some properties of a weak solution. 
Then, we showed the boundedness of the solution in the $L^1(\Omega)$-norm and in the $L^2(\Omega)$-norm. 
Both results are based on the interplay of the two species in the population dynamics model and the oppositely acting mechanisms of growth and decay. 
We defined a stationary problem for showing the boundedness of the solution in the $L^\infty(\Omega)$-norm. 

In Sec.~\ref{sec:num}, we evaluated the sharpness of the used estimates in the proofs. 
The numerical simulations show that the estimates are rather loose for the regarded cases. 

In the light of the application, modeling liver infections, the estimation can be seen as a worst case scenario. 
The boundedness of the $L^\infty(\Omega)$-norm of $v$ is a feature uprating the model for liver infections. 

A further investigation could be the improving of the used estimated such that the difference between estimate and real value of the functionals becomes smaller. 
Another possible extension is the application of the estimates for a wider class of integro-partial differential equations. 

\subsection*{Financial disclosure}

None reported.

\subsection*{Conflict of interest}

The authors declare no potential conflict of interests.

\section*{References}

\bibliography{paper_existence_arXiv}

\end{document}